\documentclass[a4paper,10pt,psamsfonts]{amsart}

%\usepackage{showkeys} % shows labels in the output file

% language support
\usepackage[utf8]{inputenc} % direct use of umlauts
% \usepackage{ngerman}        % German language support 
% \usepackage[T1]{fontenc}    % font expansion for umlauts

% AMS-Packages
\usepackage{amsmath}
\usepackage{amstext}
\usepackage{amsfonts}
\usepackage{amsthm}
\usepackage{amssymb}
\usepackage{stmaryrd}

% graphics
%\usepackage{graphicx}
%\usepackage{wrapfig}
%\usepackage{rotating}
%\usepackage{subfigure}
%\usepackage{psfrag}

% tables
%\usepackage{booktabs}

% generating hyperlinks in the output file
\usepackage{hyperref}
%\hypersetup{colorlinks}
%\hypersetup{linktocpage}

% page formatting
% \parindent0cm

% changing format of equation and figure counter

\newcommand{\sect}%
{
  \setcounter{equation}{0}%
  \setcounter{figure}{0}%
  \section
}

% shortcuts for frequently used commands
\newcommand{\R}{\mbox{$\mathbb{R}$}}

\newcommand{\E}{\mbox{$\mathbf{E}$}}

\newcommand{\F}{\mbox{$\mathcal{F}$}}

\newcommand{\D}{\mbox{$\mathcal{D}$}}

\newcommand{\LOH}{L^p(\Omega;H)}
\newcommand{\LOR}{L^p(\Omega;\mathbb{R})}
\newcommand{\dt}{\, \mathrm{d}t}
\newcommand{\ds}{\, \mathrm{d}\sigma}
\newcommand{\dWt}{\, \mathrm{d}W(t)}
\newcommand{\dWs}{\, \mathrm{d}W(\sigma)}
\newcommand{\dd}{\, \mathrm{d}}
\newcommand{\ee}{\, \mathrm{e}}
% theorem environments English/ German
\theoremstyle{plain}

\newtheorem{definition}{Definition}[section]

\newtheorem{theorem}[definition]{Theorem}
\newtheorem{lemma}[definition]{Lemma}

\newtheorem{corollary}[definition]{Corollary}

\newtheorem{assumption}[definition]{Assumption}

\theoremstyle{definition}

\newtheorem*{remark}{Remark}
\newtheorem*{remarks}{Remarks}

\newtheorem{example}[definition]{Example}

\begin{document}

\title[Regularity for semilinear SPDE with multiplicative noise] 
{Optimal Regularity for Semilinear\\
Stochastic Partial Differential Equations\\
with Multiplicative Noise}
\author[R.~Kruse]{Raphael Kruse$^*$}
\author[S.~Larsson]{Stig Larsson$^\dag$}
\email{rkruse@math.uni-bielefeld.de}
%\URL{http://www.math.uni-bielefeld.de/~rkruse}
\email{stig@chalmers.se}
\keywords{SPDE, H\"older continuity, temporal and spatial regularity,
multiplicative noise, Lipschitz nonlinearities}
\subjclass[2000]{35B65, 35R60, 60H15}
%\address{Universit\"at Bielefeld}

% address of first author
\footnotetext[1]{Department of Mathematics, Bielefeld University, P.O. Box
100131,33501 Bielefeld, Germany, \\ 
supported by CRC 701 'Spectral Structures and Topological Methods 
in Mathematics' and DFG-IGK 1132 'Stochastics and Real World Models'.}

% address of second author
\footnotetext[2]{Department of Mathematical Sciences, Chalmers University of
Technology and University of Gothenburg, SE-412 96 Gothenburg, Sweden,
supported by the Swedish Research Council (VR) and by the Swedish Foundation
for Strategic Research (SSF) through GMMC, the Gothenburg Mathematical Modelling
Centre.}

\begin{abstract}
  This paper deals with the spatial and temporal regularity of the
  unique Hilbert space valued mild solution to a semilinear stochastic
  partial differential equation with nonlinear terms that satisfy
  global Lipschitz conditions. It is shown that the mild solution has
  the same optimal regularity properties as the stochastic
  convolution. The proof is elementary and makes use of existing
  results on the regularity of the solution, in particular, the
  H\"older continuity with a non-optimal exponent.
\end{abstract}

\maketitle

\sect{Introduction}

Consider the following semilinear stochastic partial differential equation
(SPDE)
\begin{align}
  \begin{split}
  \dd X(t) + \left[ AX(t) + F(X(t)) \right] \dt &= G(X(t)) \dWt,
  \quad \text{ for } 0 \le t \le T,\\
  X(0) &= X_0,
\end{split}
\label{eq1:SPDE}
\end{align}
where the mild solution $X$ takes values in a Hilbert space $H$. The
linear operator $-A \colon D(A) \subset H \to H$ is self-adjoint and
the generator of an analytic semigroup $E(t)=\ee^{-tA}$ on $H$. For example,
let $-A$ be 
the Laplacian with homogeneous Dirichlet boundary conditions and $H =
L^2(\D)$ for some bounded domain $\D \subset \R^d$ with smooth
boundary $\partial \D$ or a convex domain with polygonal boundary.
The nonlinear operators $F$ and $G$ are assumed to be globally
Lipschitz continuous in the appropriate sense and $W\colon [0,T]
\times \Omega \to U$ denotes a standard $Q$-Wiener process on a
probability space $(\Omega,\F,P)$ with values in some Hilbert space
$U$.

In a recent paper \cite{jentzen2010b} the authors prove the existence
of a unique mild solution $X \colon [0,T] \times \Omega \to
H$. Moreover, they show that $X$ enjoys certain spatial and temporal
regularity properties.

The spatial regularity is measured in terms of the domains $\dot{H}^r
:= D(A^{\frac{r}{2}})$, $r \ge 0$, of fractional powers of the
operator $A$. If $-A$ is the Laplacian, these domains coincide with
standard Sobolev spaces, for example, $\dot{H}^1 = H^1_0(\D)$ or
$\dot{H}^2 = H^1_0(\D) \cap H^2(\D)$ (c.f.\
\cite[Th.~6.4]{larsson2003} or \cite[Ch.~3]{thomee2006}). The
regularity in time is expressed by the H\"older exponent.

Using only Lipschitz assumptions on $F$ and $G$ the authors of
\cite{jentzen2010b} show that for every $\gamma \in [0,1)$ the
solution $X$ maps into $\dot{H}^\gamma \subset H$ and is
$\frac{\gamma}{2}$-H\"older continuous with respect to the norm $\big(
\E\big[ \| \cdot \|^p_{H} \big] \big)^{\frac{1}{p}}$, $p \in [2,\infty)$.

As we will demonstrate in this paper, it turns out that this is also true for
the border case $\gamma = 1$. The proof is based on a very careful
use of the smoothing property of the corresponding semigroup $E(t) =
\ee^{-tA}$ (see Lemma \ref{lem:1}), and on the H\"older continuity of $X$
with a suboptimal exponent (see Lemmas \ref{lem:2} and \ref{lem:3}).

The case $\gamma = 1$ is of special interest in numerical
analysis. For example, if one is analysing an approximation scheme
based on a finite element method, the spatial regularity determines
the order of convergence. Hence, a suboptimal regularity result leads
to a suboptimal estimate of the order of convergence
(c.f.\ \cite{thomee2006}).

Evolution equations of the form \eqref{eq1:SPDE} are also studied by other
authors. We refer to \cite{daprato1992,krylov1979,rozovski1990,walsh1986} and
the references therein. A related result is \cite{zhang2007}, where
conditions for spatial $C^\infty$-regularity are given.      

The optimal regularity of stochastic convolutions of the form
\begin{align*}
  W_A^\Phi(t) = \int_{0}^{t} E(t - \sigma) \Phi(\sigma) \dWs, 
\end{align*}
is studied in \cite[Prop.~6.18]{daprato1992} and \cite{daprato1982}.
Here $E(t)=\ee^{-tA}$ is an analytic semigroup and $\Phi$ is a
stochastically square integrable ($p=2$) process with values in the
set of Hilbert-Schmidt operators. If, for $r \ge 0$, the process
$\Phi$ is regular enough so that the process $t \mapsto
A^{\frac{r}{2}} \Phi(t)$ is still stochastically square integrable,
then the convolution is a stochastic process, which is square
integrable with values in $\dot{H}^{1+r}$. There exist some
generalizations of this result, for instance, to Banach space valued
integrands \cite{daprato1998}, to the case $p>2$ \cite{neerven2010},
and to L\'{e}vy noise \cite{brzezniak2009}.

Our regularity result for the mild solution of \eqref{eq1:SPDE}
coincides with the optimal regularity property of the stochastic
convolution but with the restriction $r<1$. In this sense we
understand our result to be optimal.

This paper consists of four additional sections. In the next section
we give a more precise formulation of our assumptions. In Section
\ref{sec:space} we are concerned with the spatial regularity of the
mild solution. The proof is divided into several lemmas, which contain
the key ideas of proof.  The lemmas are also useful in the proof of
the temporal H\"older continuity in Section \ref{sec:time}. The proof
of continuity in the border case requires an additional argument in
form of Lebesgue's dominated convergence theorem.  This technique is
also developed in Section \ref{sec:time}. The last section briefly
reviews our result in the special case of additive noise and gives an
example which demonstrates that the spatial regularity results are
indeed optimal.

\sect{Preliminaries}
\label{sec:intro}
In this section we present the general form of the SPDE we are interested in.
After introducing some notation we state our assumptions and cite the result
on existence, uniqueness and regularity of a mild solution from
\cite{jentzen2010b}.

By $H$ we denote a separable Hilbert space $\left(H,( \cdot, \cdot ), \| \cdot
\| \right)$. Further, let $A \colon D(A) \subset H \to H$ be a densely defined,
linear, self-adjoint, positive definite operator, which is not
necessarily bounded but with compact inverse. Hence, there exists an increasing
sequence of real numbers $(\lambda_n)_{n \ge 1}$ and an orthonormal basis
$(e_n)_{n \ge 1}$ in $H$ such that $Ae_n = \lambda_n e_n$ and 
\begin{align*}
  0 < \lambda_1 \le \lambda_2 \le \ldots \le \lambda_n (\to \infty).
\end{align*}
The domain of $A$ is characterized by 
\begin{align*}
  D(A) = \Big\{ x \in H \, : \, \sum_{n = 1}^{\infty} \lambda_n^2 (x,e_n)^2
  < \infty
  \Big\}. 
\end{align*}
Thus, $-A$ is the generator of an analytic semigroup of contractions,
which is denoted by $E(t) = \ee^{-At}$.

By $W \colon [0,T] \times \Omega \to U$ we denote a $Q$-Wiener process
with values in a separable Hilbert space $(U, (\cdot, \cdot)_U, \|
\cdot \|_U)$.  While our underlying probability space is $(\Omega, \F,
P)$, we assume that the Wiener process is adapted to a normal
filtration $(\F_t)_{t \in [0,T]} \subset \F$. The covariance operator
$Q \colon U \to U$ is linear, bounded, self-adjoint, positive
semidefinite but not necessarily of finite trace.

We study the regularity properties of a stochastic
process $X \colon [0,T] \times \Omega \to H$, $T > 0$, which is
the mild solution to the stochastic partial differential equation
\eqref{eq1:SPDE}. Thus, $X$ satisfies the equation
\begin{align}
  X(t) = E(t) X_0 - \int_0^t E(t-\sigma) F(X(\sigma)) \ds + \int_0^t
  E(t-\sigma) G(X(\sigma)) \dWs 
  \label{eq1:mild}
\end{align}
for all $0 \le t \le T$.

In order to formulate our assumptions and main result we introduce the
notion of fractional powers of the linear operator $A$.  For any $r
\in \R$ the operator $A^{\frac{r}{2}}$ is given by
\begin{align*}
  A^{\frac{r}{2}} x = \sum_{n = 1}^\infty \lambda_n^{\frac{r}{2}} x_n e_n 
\end{align*}
for all 
\begin{multline*}
  x \in D(A^{\frac{r}{2}}) 
  = \Big\{ x = \sum_{n = 1}^\infty x_n e_n \, : \,
  (x_n)_{n \ge 1} \subset \R \\ \text{ with } \|x \|_{r}^2 
  := \| A^{\frac{r}{2}} x
  \|^2=\sum_{n=1}^\infty \lambda_n^{r} x_n^2 < \infty \Big\}.
\end{multline*}
By defining $\dot{H}^r := D( A^{\frac{r}{2}} )$ together with the norm
$\| x \|_r$ for $r \in \R$, $\dot{H}^r$ becomes a
Hilbert space. 

As usual \cite{daprato1992,roeckner2007} we introduce the separable Hilbert
space $U_0 := Q^{\frac{1}{2}}(U)$ with the inner product $(u_0,v_0)_{U_0} :=
(Q^{-\frac{1}{2}} u_0, Q^{-\frac{1}{2}} v_0)_U$ with $Q^{-\frac{1}{2}}$
denoting the pseudoinverse. The diffusion operator $G$ maps $H$ into $L^0_2$,
where $L^0_2$ denotes the space of all Hilbert-Schmidt operators $\Phi \colon
U_0 \to H$ with norm
\begin{align*}
  \| \Phi \|_{L^0_2}^2 := \sum_{m = 1}^\infty \| \Phi \psi_m \|^2.
\end{align*}
Here $(\psi_m)_{m \ge 1}$ is an arbitrary orthonormal basis of $U_0$ (for
details see, for example, Proposition 2.3.4 in \cite{roeckner2007}). 
Further, $L_{2,r}^0$ denotes the
set of all Hilbert-Schmidt operators $\Phi \colon U_0 \to \dot{H}^r$ together
with the norm $\| \Phi \|_{L^0_{2,r}} := \| A^{\frac{r}{2}} \Phi \|_{L^0_2}$.

Let $r \in [0,1)$, $p\in [2,\infty)$ be given.
As in \cite{jentzen2010b,printems2001} we make the following
additional assumptions. 

\begin{assumption}
  \label{as:1}
  There exists a constant $C$ such that
  \begin{align}
    \| G(x) - G(y) \|_{L^0_2} \le C \| x - y \| \quad \forall x,y \in H    
    \label{eq1:G_lip}
  \end{align}
  and we have that $G(\dot{H}^r)
  \subset L_{2,r}^0$ and 
  \begin{align}
    \| G(x) \|_{L_{2,r}^0} \le C \left( 1 + \| x \|_{r} \right) \quad \forall x
    \in \dot{H}^r.
    \label{eq1:G_lin}
  \end{align}
\end{assumption}

\begin{assumption}
  \label{as:3}
  The nonlinearity
  $F$ maps $H$ into $\dot{H}^{-1+r}$. Furthermore, there exists a
  constant $C$ such that
  \begin{align}
    \| F(x) -F(y) \|_{-1+r} \le C \| x
    - y \| \quad \forall x,y \in H.
    \label{eq1:F_lip}
  \end{align}
\end{assumption}

\begin{assumption}
  \label{as:2}
  The initial
  value $X_0\colon \Omega \to \dot{H}^{r+1}$ is an $\F_0$-measurable random
  variable with $\E\left[ \| X_0 \|^p_{r+1}\right] <\infty$.
\end{assumption}

Under the above conditions Theorem $1$ in \cite{jentzen2010b} states
that for every $\gamma \in [r, r + 1)$ and $T>0$ there exists an up to
modification unique mild solution $X \colon [0,T] \times \Omega \to
\dot{H}^\gamma$ to \eqref{eq1:SPDE} of the form \eqref{eq1:mild},
which satisfies $$\sup_{t \in [0,T]} \E\left[ \| X(t) \|^p_{\gamma}
\right] < \infty.$$ Moreover, the solution process is continuous with
respect to $\big(\E \left[ \| \cdot \|_{\gamma}^p \right]
\big)^{\frac{1}{p}}$ and fulfills
\begin{align*}
  \sup_{t_1,t_2 \in [0,T], t_1 \neq t_2} \frac{\big(\E \left[ \| X(t_1) -
  X(t_2) \|_{s}^p\right] \big)^{\frac{1}{p}} }{|t_1
  -t_2|^{\min(\frac{1}{2},\frac{\gamma - s}{2})}} < \infty 
\end{align*}
for every $s \in [0,\gamma]$. 

The aim of this paper is to show that these regularity results also hold with
$\gamma = r +1$.  

\begin{remarks}
  1. Actually, Theorem $1$ in \cite{jentzen2010b} assumes that $F \colon H \to
  H$ is globally Lipschitz, which is slightly stronger
  than Assumption \ref{as:3}. That Assumption \ref{as:3} is sufficient can be
  proved by just following the given proof line by line and making the
  appropriate changes where ever $F$ comes into play. 

  2. The linear growth bound \eqref{eq1:G_lin} follows from \eqref{eq1:G_lip}
  when $r = 0$.

      3. Assumption \ref{as:2} can be relaxed to $X_0 \colon \Omega \to H$
      being an 
      $\F_0$-measurable random variable with $\E\left[ \| X_0 \|^p \right] <
      \infty$. But, 
      as it is known from deterministic PDE theory, this will lead to a 
      singularity at $t=0$. 

      4. The framework is quite general. More explicit examples and a
      detailed 
      discussion of Assumption \ref{as:1} can be found in \cite{jentzen2010b}.
      We also refer to the discussion in
      \cite{printems2001} for further examples, references
      and a related result for temporal regularity. 
\end{remarks}

\sect{Spatial regularity}
\label{sec:space}
In this section we deal with the spatial regularity of the mild solution. Our
result is given by the following theorem. For a
more convenient notation we set $\| \cdot \|_{L^p(\Omega;\mathcal{H})} := 
\left( \E \left[ \| \cdot \|^p_{\mathcal{H}} \right] \right)^{\frac{1}{p}}$ for
any Hilbert space $\mathcal{H}$.
Also, if applied to an operator, the norm $\| \cdot \|$ is understood as the
operator norm for bounded, linear operators from $H$ to $H$. 

\begin{theorem}[Spatial regularity]
  \label{prop:1}
  Let $r\in[0,1)$, $p\in[2,\infty)$. Given the assumptions of Section
  \ref{sec:intro} the unique mild solution $X$ in \eqref{eq1:mild}
  satisfies
  \begin{multline*}
    \sup_{t \in [0,T]} \big( \E \big[ \big\| X(t) \big\|_{r+1}^p \big]
    \big)^{\frac{1}{p}} \le \big( \E \big[ \big\| X_0 \big\|_{r+1}^p \big]
    \big)^{\frac{1}{p}} + C \Big( 1 + \sup_{t \in [0,T]} \big( \E \big[
    \big\| X(t) \big\|_{r}^p \big] \big)^{\frac{1}{p}} \Big),   
  \end{multline*}
  where the constant $C$ depends on $p$, $r$, $A$, $F$, $G$, $T$ and the
  H\"older continuity constant of $X$ with respect to the norm $\| \cdot
  \|_{\LOH}$. 

  In particular, $X$ maps into $\dot{H}^{r+1}$ almost surely.
\end{theorem}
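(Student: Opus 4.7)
The plan is to apply $A^{(r+1)/2}$ to both sides of the mild formula \eqref{eq1:mild}, take $\|\cdot\|_{\LOH}$ norms, and estimate the three resulting contributions separately. The initial term is immediate, since $\|A^{(r+1)/2}E(t) X_0\| = \|E(t) X_0\|_{r+1} \le \|X_0\|_{r+1}$ by the contraction property of $E(t)$. The real work lies in the deterministic and stochastic convolutions, both of which appear to suffer from a non-integrable singularity at $\sigma = t$ when one naively uses $\|A^{(r+1)/2} E(t-\sigma)\| \le C(t-\sigma)^{-(r+1)/2}$.

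For the drift term I would use the subtraction trick
\begin{align*}
\int_0^t E(t-\sigma) F(X(\sigma)) \ds
 = \int_0^t E(t-\sigma) [F(X(\sigma)) - F(X(t))] \ds + A^{-1}(I - E(t)) F(X(t)),
\end{align*}
based on the identity $\int_0^t E(t-\sigma) \ds = A^{-1}(I-E(t))$. Applying $A^{(r+1)/2}$ to the first summand and invoking Assumption \ref{as:3} together with the smoothing bound $\|AE(t-\sigma)\|_{\dot{H}^{-1+r} \to H} \le C(t-\sigma)^{-1}$ reduces matters to controlling $\int_0^t (t-\sigma)^{-1} \|X(\sigma) - X(t)\|_H \ds$, which is finite once any positive H\"older exponent is plugged in from the suboptimal regularity result of \cite{jentzen2010b}. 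The second summand equals $A^{(r-1)/2}(I - E(t)) F(X(t))$, whose $\LOH$ norm is bounded directly by $C(1 + \|X(t)\|_{\LOH})$ via Assumption \ref{as:3}.

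For the stochastic convolution I would use a Burkholder--Davis--Gundy type inequality to reduce the task to estimating
\begin{align*}
\E \Bigl[ \Bigl( \int_0^t \|A^{(r+1)/2} E(t-\sigma) G(X(\sigma))\|_{L^0_2}^2 \ds \Bigr)^{p/2} \Bigr],
\end{align*}
and then to split the integrand pointwise in $\sigma$, via the triangle inequality, as $\|A^{(r+1)/2} E(t-\sigma) [G(X(\sigma)) - G(X(t))]\|_{L^0_2} + \|A^{(r+1)/2} E(t-\sigma) G(X(t))\|_{L^0_2}$. For the first piece, the operator bound $\|A^{(r+1)/2} E(t-\sigma)\| \le C(t-\sigma)^{-(r+1)/2}$, the Lipschitz estimate \eqref{eq1:G_lip}, and the suboptimal H\"older continuity of $X$ with some exponent $\beta \in (r/2, 1/2)$ (available because $r<1$) yield the integrable kernel $(t-\sigma)^{-(r+1)+2\beta}$. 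For the second piece, the essential ingredient is the integrated smoothing estimate
\begin{align*}
\int_0^t \|A^{(r+1)/2} E(t-\sigma) \Phi\|_{L^0_2}^2 \ds \le C \|\Phi\|_{L^0_{2,r}}^2 \quad \text{for every } \Phi \in L^0_{2,r},
\end{align*}
which follows from the spectral representation, since the $\sigma$-integration converts the factor $\lambda_n^{r+1}$ into $\lambda_n^{r}$. Specializing to $\Phi = G(X(t))$ and invoking \eqref{eq1:G_lin} bounds this contribution by $C(1 + \|X(t)\|_{L^p(\Omega;\dot{H}^r)})$.

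The main obstacle is exactly the $(t-\sigma)^{-1}$ singularity of the squared integrand in the stochastic term: a direct application of the Da Prato--Zabczyk stochastic convolution estimate for $L^0_{2,r}$-valued integrands does not transfer immediately because $G(X(\sigma))$ is only Lipschitz in the weaker $L^0_2$-norm, not in $L^0_{2,r}$. The remedy is the combination of the two complementary devices above: the H\"older-subtraction trick trades part of the singularity for a gain of $|\sigma-t|^{2\beta}$, while the integrated semigroup estimate supplies the missing regularity for the residual frozen term $G(X(t))$. Collecting the three estimates, taking $\|\cdot\|_{\LOH}$ norms and finally the supremum over $t \in [0,T]$, one obtains the asserted bound, with the constant $C$ absorbing the (finite by \cite{jentzen2010b}) H\"older constant of $X$ with respect to $\|\cdot\|_{\LOH}$.
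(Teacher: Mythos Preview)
Your proposal is correct and follows essentially the same strategy as the paper's proof: the three-term split of the mild formula, the ``freeze at time $t$'' decomposition $G(X(\sigma)) = G(X(t)) + [G(X(\sigma)) - G(X(t))]$ (and likewise for $F$), the integrated smoothing estimates for the frozen terms (your identity $\int_0^t E(t-\sigma)\,\mathrm{d}\sigma = A^{-1}(I-E(t))$ is precisely Lemma~\ref{lem:1}\,\emph{(iv)}, and your spectral bound on $\int_0^t\|A^{(r+1)/2}E(t-\sigma)\Phi\|_{L^0_2}^2\,\mathrm{d}\sigma$ is Lemma~\ref{lem:1}\,\emph{(iii)}), and the use of the suboptimal H\"older exponent $\beta\in(r/2,1/2)$ to tame the difference terms, are exactly the devices the paper packages into Lemmas~\ref{lem:2} and~\ref{lem:3}. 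The paper fixes $\delta=(r+1)/4$ as the concrete choice of $\beta$, but this is immaterial.
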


Before we prove the theorem we introduce several useful lemmas.
The first states some well known facts on
analytic semigroups (c.f.\ \cite{pazy1983}). Since parts \emph{(iii)},
\emph{(iv)} are not readily found in the literature, we provide proofs here.

\begin{lemma}
  \label{lem:1}
  For the analytic semigroup $E(t)$ the following properties hold true:

  (i) For any $\mu \ge 0$ there exists a constant $C = C(\mu)$ such that
  \begin{align*}
    \| A^\mu E(t) \| \le C t^{-\mu} \text{ for } t > 0.
  \end{align*}

  (ii) For any $0\le \nu \le 1$ there exists a constant $C = C(\nu)$ such
  that
  \begin{align*}
    \| A^{-\nu}(E(t) - I) \| \le C t^\nu \text{ for } t \ge 0.
  \end{align*}

  (iii) For any $0 \le \rho \le 1$ there exists a constant $C = C(\rho)$ such
  that
  \begin{align*}
    \int_{\tau_1}^{\tau_2} \| A^{\frac{\rho}{2}} E(\tau_2 - \sigma) x \|^2 \ds
    \le C (\tau_2 - \tau_1)^{1-\rho} \left\| x 
    \right\|^2 \text{ for all $x \in H$,
    $0 \le \tau_1 < \tau_2$}.
  \end{align*}

  (iv) For any $0 \le \rho \le 1$ there exists a constant $C = C(\rho)$ such
  that
  \begin{align*}
    \Big\| A^{\rho} \int_{\tau_1}^{\tau_2} E(\tau_2 - \sigma) x \ds
    \Big\| \le C (\tau_2 - \tau_1)^{1-\rho} \| x \| \text{ for all $x
    \in H$, $0 \le \tau_1 < \tau_2$}.
  \end{align*}
\end{lemma}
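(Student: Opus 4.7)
The plan is to reduce all four estimates to scalar calculus on the spectrum of $A$. Since $(e_n)_{n\ge 1}$ is an orthonormal eigenbasis with $A e_n = \lambda_n e_n$ and $\lambda_n > 0$, for any $x = \sum_n x_n e_n \in H$ and any $\alpha \in \R$ the analytic semigroup admits the spectral representation
\begin{align*}
  A^\alpha E(t) x = \sum_{n=1}^\infty \lambda_n^\alpha \ee^{-\lambda_n t} x_n e_n.
\end{align*}
By Parseval, each of the four claims reduces to a uniform bound on an explicit nonnegative function of $\lambda > 0$ that one reads off from this representation.

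For part \emph{(i)}, I would use that $\lambda \mapsto \lambda^\mu \ee^{-\lambda t}$ attains its maximum on $(0,\infty)$ at $\lambda = \mu/t$, which gives $\sup_{\lambda>0}\lambda^\mu \ee^{-\lambda t} \le (\mu/(\ee t))^\mu$ and hence $\|A^\mu E(t)\| \le C t^{-\mu}$. For part \emph{(ii)}, the crucial scalar inequality is
\begin{align*}
  1 - \ee^{-s} \le s^\nu \quad \text{for all } s \ge 0, \; \nu \in [0,1],
\end{align*}
which one checks by splitting into $s \in [0,1]$ (using $1 - \ee^{-s} \le s \le s^\nu$) and $s \ge 1$ (using $1 - \ee^{-s} \le 1 \le s^\nu$). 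Applied with $s = \lambda t$ this yields $\lambda^{-\nu}(1 - \ee^{-\lambda t}) \le t^\nu$ and hence \emph{(ii)}.

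For part \emph{(iii)}, after the substitution $s = \tau_2 - \sigma$ and with $h := \tau_2 - \tau_1$, Parseval and Fubini give
\begin{align*}
  \int_{\tau_1}^{\tau_2} \|A^{\frac{\rho}{2}} E(\tau_2 - \sigma) x\|^2 \ds
  = \sum_{n=1}^\infty x_n^2 \lambda_n^\rho \int_0^h \ee^{-2\lambda_n s}\ds
  = \tfrac{1}{2}\sum_{n=1}^\infty x_n^2 \lambda_n^{\rho-1}\bigl(1-\ee^{-2\lambda_n h}\bigr).
\end{align*}
Applying the same scalar inequality with exponent $1-\rho \in [0,1]$ gives $\lambda_n^{\rho-1}(1-\ee^{-2\lambda_n h}) \le 2^{1-\rho} h^{1-\rho}$, so the sum is bounded by $C h^{1-\rho}\|x\|^2$, as required.

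For part \emph{(iv)} the strategy is parallel: in spectral form one has
\begin{align*}
  A^\rho \int_{\tau_1}^{\tau_2} E(\tau_2 - \sigma) x \ds
  = \sum_{n=1}^\infty \lambda_n^{\rho-1}\bigl(1-\ee^{-\lambda_n h}\bigr) x_n e_n,
\end{align*}
and the coefficientwise bound $\lambda_n^{\rho-1}(1-\ee^{-\lambda_n h}) \le h^{1-\rho}$, again via the inequality above with exponent $1-\rho$, yields the estimate by Parseval. The only analytic input in \emph{(ii)}--\emph{(iv)} is the elementary bound $1 - \ee^{-s} \le s^\nu$; I do not anticipate a serious obstacle, as the argument uses nothing beyond self-adjointness of $A$ and positivity of its spectrum.
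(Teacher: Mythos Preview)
Your proposal is correct and follows essentially the same route as the paper: the paper only proves \emph{(iii)} and \emph{(iv)} (citing \emph{(i)}, \emph{(ii)} as standard), and for those it uses exactly your spectral expansion plus the scalar bound that $(1-\ee^{-x})/x^{\kappa}$ is bounded on $[0,\infty)$ for $\kappa\in[0,1]$, which is equivalent to your inequality $1-\ee^{-s}\le s^{\nu}$. Your additional short arguments for \emph{(i)} and \emph{(ii)} are fine and consistent with the same spectral viewpoint.
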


\begin{proof}
  \emph{(iii)} We use the expansion of $x\in H$ in terms
  of the eigenbasis $(e_n)_{n \ge 1}$ of the operator $A$. By Parseval's
  identity we get 
  \begin{align*}
    \int_{\tau_1}^{\tau_2} \big\| A^{\frac{\rho}{2}} E(\tau_2 - \sigma) x
    \big\|^2 \ds
    &= \int_{\tau_1}^{\tau_2} \Big\| \sum_{n = 1}^\infty
    A^{\frac{\rho}{2}} E(\tau_2 - \sigma) (x,e_n) e_n \Big\|^2 \ds\\
    &= \sum_{n=1}^\infty \int_{\tau_1}^{\tau_2} (x,e_n)^2 \lambda_n^\rho
    \ee^{-2\lambda_n (\tau_2 - \sigma)} \ds\\
    &= \frac{1}{2} \sum_{n = 1}^\infty (x,e_n)^2 \lambda_n^{\rho - 1}
    \left(1 - \ee^{-2 \lambda_n (\tau_2 - \tau_1)}\right).
  \end{align*}
  For all $\kappa \in [0,1]$ the function $x
  \mapsto \frac{1 - \ee^{-2x}}{x^\kappa}$ is bounded for $x \in [0, \infty)$.
  Hence,
  \begin{align*}
    0 \le \frac{1 - \ee^{-2 \lambda_n(\tau_2 - \tau_1)}}{( \lambda_n (\tau_2 -
    \tau_1) )^{1-\rho}} \le C(\rho)
  \end{align*}
  for some constant $C(\rho)$, which depends only on $\rho$. Therefore,
  \begin{align*}
    \int_{\tau_1}^{\tau_2} \big\| A^{\frac{\rho}{2}} E(\tau_2 - \sigma) x
    \big\|^2 \ds &\le \frac{1}{2} C(\rho) (\tau_2 - \tau_1)^{1 - \rho}
    \left\| x  \right\|^2. 
  \end{align*}
  The proof of \emph{(iv)} works in a similar way. We square the
  left-hand side and use Parseval's identity again. This yields
  \begin{align*}
    \Big\| A^\rho \int_{\tau_1}^{\tau_2} E(\tau_2 - \sigma) x \ds \Big\|^2
    &= \Big\|  A^\rho \int_{\tau_1}^{\tau_2} \sum_{n =1}^\infty E(\tau_2 -
    \sigma) (x,e_n) e_n \ds \Big\|^2\\
    &= \sum_{n=1}^\infty \lambda_n^{2\rho} \Big( \int_{\tau_1}^{\tau_2} (x,e_n)
    \ee^{-\lambda_n(\tau_2 - \sigma)} \ds \Big)^2\\
    &= \sum_{n=1}^\infty (x,e_n)^2 \Big( \frac{1 - \ee^{-\lambda_n(\tau_2 -
    \tau_1)}}{\lambda_n^{1-\rho}} \Big)^2.
  \end{align*}
  As above we conclude
  \begin{align*}
    \Big\| A^\rho \int_{\tau_1}^{\tau_2} E(\tau_2 - \sigma) x \ds \Big\|^2
    \le C(\rho)^2 (\tau_2 - \tau_1)^{2(1-\rho)} \| x \|^2. 
  \end{align*}
The proof is complete. 
\end{proof}

The next lemma is a special case of Lemma 7.2 in \cite{daprato1992} and will
be needed to estimate the stochastic integrals. 

\begin{lemma}
  \label{lem:stoch_int}
  For any $p \ge 2$, $0 \le \tau_1 < \tau_2 \le T$, and for any 
  $L_2^0$-valued predictable process $\Phi(t)$, $t\in [\tau_1,\tau_2]$, we have
  \begin{align*}
    \E \Big[ \Big\| \int_{\tau_1}^{\tau_2} \Phi(\sigma) \dWs
    \Big\|^{p} \Big] \le C(p) \E\Big[ \Big( \int_{\tau_1}^{\tau_2}
    \big\| \Phi(\sigma) \big\|^2_{L_2^0} \ds \Big)^{\frac{p}{2}} \Big].
  \end{align*}
  Here the constant can be chosen to be
  \begin{align*}
    C(p) = \left( \frac{p}{2} ( p - 1)
    \right)^{\frac{p}{2}} \left( \frac{p}{p - 1}
    \right)^{p(\frac{p}{2} - 1)}.
  \end{align*}
\end{lemma}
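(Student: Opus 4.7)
The plan is to apply Itô's formula in the Hilbert space $H$ to the real-valued process $\varphi(M(t))=\|M(t)\|^p$ associated with the $H$-valued martingale
\[
M(t):=\int_{\tau_1}^{t}\Phi(\sigma)\dWs,\qquad t\in[\tau_1,\tau_2].
\]
This is the standard Hilbert-space Burkholder--Davis--Gundy-type argument. For $p\ge 2$, the map $\varphi(x)=\|x\|^p$ is twice Fréchet differentiable away from the origin, with $\varphi'(x)h=p\|x\|^{p-2}(x,h)$ and $\varphi''(x)[h,k]=p(p-2)\|x\|^{p-4}(x,h)(x,k)+p\|x\|^{p-2}(h,k)$; the mild non-smoothness at $x=0$ (when $p$ is not an even integer) is handled via the regularization $\varphi_\epsilon(x):=(\|x\|^2+\epsilon)^{p/2}$ and the passage $\epsilon\downarrow 0$. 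A preliminary localization by the stopping times $\tau_N:=\inf\{t\in[\tau_1,\tau_2]:\int_{\tau_1}^{t}\|\Phi(\sigma)\|_{L_2^0}^2 \ds \ge N\}\wedge \tau_2$ reduces the argument to the case in which every expectation below is finite; Fatou's lemma then recovers the general statement.

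After taking expectation, the martingale term vanishes and one is left with
\[
\E\|M(\tau_2)\|^p = \tfrac12\,\E\int_{\tau_1}^{\tau_2} \mathrm{tr}\bigl(\varphi''(M(\sigma))\,\Phi(\sigma) Q \Phi(\sigma)^*\bigr)\ds.
\]
Expanding the trace against an orthonormal basis $(\psi_m)$ of $U_0$, Bessel's inequality gives
\[
\sum_m (M(\sigma),\Phi(\sigma)\psi_m)^2\le \|M(\sigma)\|^2\|\Phi(\sigma)\|_{L_2^0}^2,
\]
so that the two contributions combine to at most $p(p-1)\|M(\sigma)\|^{p-2}\|\Phi(\sigma)\|_{L_2^0}^2$. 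Hence
\[
\E\|M(\tau_2)\|^p \le \tfrac{p(p-1)}{2}\,\E\int_{\tau_1}^{\tau_2}\|M(\sigma)\|^{p-2}\|\Phi(\sigma)\|_{L_2^0}^2\ds.
\]

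To close the estimate, I would apply Hölder's inequality with conjugate exponents $\tfrac{p}{p-2}$ and $\tfrac{p}{2}$ (the case $p=2$ being immediate from Itô's isometry) and then Doob's maximal inequality $\E\sup_{\sigma\in[\tau_1,\tau_2]}\|M(\sigma)\|^p\le\bigl(\tfrac{p}{p-1}\bigr)^p\E\|M(\tau_2)\|^p$, valid for the $H$-valued martingale $M$. Writing $X:=\E\|M(\tau_2)\|^p$ and $Y:=\E\bigl(\int_{\tau_1}^{\tau_2}\|\Phi(\sigma)\|_{L_2^0}^2\ds\bigr)^{p/2}$, the previous display becomes
\[
X \le \tfrac{p(p-1)}{2}\bigl(\tfrac{p}{p-1}\bigr)^{p-2} X^{(p-2)/p}\, Y^{2/p}.
\]
Dividing by $X^{(p-2)/p}$ and raising to the power $p/2$ yields $X\le C(p)\,Y$ with exactly $C(p)=\bigl(\tfrac{p}{2}(p-1)\bigr)^{p/2}\bigl(\tfrac{p}{p-1}\bigr)^{p(p/2-1)}$, as claimed. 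The only real obstacle is technical: the $\varphi_\epsilon$-regularization handles the non-smoothness of $\|\cdot\|^p$ at the origin, and the localization by $\tau_N$ supplies the integrability required by the Hölder--Doob step; removing both limits at the end is routine.
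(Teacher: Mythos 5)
Your argument is correct, including the exact constant, and it is the standard proof of the result the paper relies on: the paper gives no proof of this lemma but simply cites it as a special case of Lemma 7.2 in \cite{daprato1992}, whose proof is precisely your combination of the It\^o formula for the regularized function $(\|x\|^2+\epsilon)^{p/2}$, the trace estimate, H\"older's inequality, and Doob's maximal inequality, with localization and Fatou to remove the integrability caveats. One cosmetic point: the bound $\sum_m (M,\Phi\psi_m)^2\le\|M\|^2\|\Phi\|_{L_2^0}^2$ is termwise Cauchy--Schwarz (or the operator-norm bound $\|\Phi^* M\|_{U_0}\le\|\Phi\|_{L_2^0}\|M\|$) rather than Bessel's inequality, since the vectors $\Phi\psi_m$ need not be orthonormal in $H$; the inequality itself is valid as stated.
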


The following two lemmas contain our main idea of proof and yield the key
estimates.
\begin{lemma}
  \label{lem:2}
  Let $s \in [0,r+1]$, $p \ge 2$, and $Y$ be a
  predictable stochastic process on $[0,T]$ which maps into $\dot{H}^r$ with
  $\sup_{\sigma \in [0,T]} \|A^{\frac{r}{2}} Y(\sigma) \|_{\LOH} < \infty$.
  Then there exists a constant $C = C(p,r,s,A,G)$ such that, for all $\tau_1,
  \tau_2 \in [0,T]$ with $\tau_1 < \tau_2$,
  \begin{multline}
    \label{eq2:1}
    \Big( \E \Big[ \Big(
    \int_{\tau_1}^{\tau_2} \big\| A^{\frac{s}{2}} E(\tau_2 - \sigma)
    G(Y(\tau_2)) \big\|_{L^0_2}^{2} \ds \Big)^{\frac{p}{2}}
    \Big] \Big)^{\frac{1}{p}}\\
    \le C  \Big(1 +  \sup_{\sigma \in [0,T]}
    \left\|A^{\frac{r}{2}} Y(\sigma)
    \right\|_{\LOH}\Big) (\tau_2 -
    \tau_1)^{\min(\frac{1}{2},\frac{1+r-s}{2})}.
  \end{multline}
  If, in addition, for some $\delta > \frac{r}{2}$ there exists $C_\delta$ such
  that
  \begin{align*}
    \left\| Y(t_1) - Y(t_2) \right\|_{\LOH} \le C_\delta |t_2 -
    t_1|^\delta \text{ for all } t_1, t_2 \in [0,T],
  \end{align*}
  then we also have, with $C = C(p,s,G,C_\delta)$, that
  \begin{multline}
    \label{eq2:2}
    \Big( \E \Big[ \Big(
    \int_{\tau_1}^{\tau_2} \left\| A^{\frac{s}{2}} E(\tau_2 - \sigma)
    \big(G(Y(\sigma)) - G(Y(\tau_2))\big) \right\|_{L^0_2}^{2} \ds
    \Big)^{\frac{p}{2}} \Big]  \Big)^{\frac{1}{p}}  \\
    \le \frac{C}{\sqrt{1 + 2 \delta - s}} (\tau_2 -
    \tau_1)^{\frac{1+2\delta-s}{2}}.
  \end{multline}
  In particular, with $C = C(T,\delta,p,r,s,A,G,C_\delta)$ it holds that
  \begin{multline}
    \label{eq2:est_stoch_int}
    \Big\| \int_{\tau_1}^{\tau_2} A^{\frac{s}{2}} E(\tau_2 - \sigma)
    G(Y(\sigma))\dWs \Big\|_{\LOH}\\
    \le C \Big(1 +  \sup_{\sigma \in [0,T]}
    \left\|A^{\frac{r}{2}} Y(\sigma)
    \right\|_{\LOH}\Big) (\tau_2 - \tau_1)^{\min(\frac{1}{2},\frac{1 + r -
    s}{2})}. 
  \end{multline}
\end{lemma}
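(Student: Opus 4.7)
The plan is to establish the three displayed inequalities \eqref{eq2:1}, \eqref{eq2:2}, and \eqref{eq2:est_stoch_int} in turn, with the first two feeding into the third. For \eqref{eq2:1}, the idea is to expand the Hilbert--Schmidt norm in an orthonormal basis $(\psi_m)_{m\ge 1}$ of $U_0$ and reduce the integral $\int_{\tau_1}^{\tau_2}\|A^{s/2}E(\tau_2-\sigma)G(Y(\tau_2))\psi_m\|^2\ds$ to a direct application of Lemma \ref{lem:1}(iii). For \eqref{eq2:2}, the plan is to use the operator-norm smoothing bound from Lemma \ref{lem:1}(i), the Lipschitz estimate \eqref{eq1:G_lip} on $G$, and the H\"older continuity of $Y$, together with Minkowski's integral inequality to pass the $L^{p/2}(\Omega;\mathbb{R})$-norm under the time integral. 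For \eqref{eq2:est_stoch_int}, I would then apply Lemma \ref{lem:stoch_int} after the splitting $G(Y(\sigma))=G(Y(\tau_2))+\bigl(G(Y(\sigma))-G(Y(\tau_2))\bigr)$ and invoke the two previous bounds on the respective pieces.

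For \eqref{eq2:1}, set $\phi_m:=G(Y(\tau_2))\psi_m$ and use commutativity of $E(t)$ with fractional powers of $A$ to rewrite the integrand as $\|A^{(s-r)/2}E(\tau_2-\sigma)A^{r/2}\phi_m\|^2$. When $r\le s\le r+1$, Lemma \ref{lem:1}(iii) with $\rho=s-r\in[0,1]$ produces the factor $(\tau_2-\tau_1)^{1+r-s}$; when $0\le s<r$, the operator $A^{(s-r)/2}$ has bounded norm (at most $\lambda_1^{(s-r)/2}$) and $E(\tau_2-\sigma)$ is a contraction, yielding a plain factor $\tau_2-\tau_1$. Summing over $m$ produces $\|G(Y(\tau_2))\|_{L^0_{2,r}}^2$, which the linear growth bound \eqref{eq1:G_lin} controls by $C(1+\|Y(\tau_2)\|_r)^2$. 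Raising to the power $p/2$, taking expectation, and extracting the $p$-th root then yields \eqref{eq2:1} with time exponent $\min\bigl(1/2,(1+r-s)/2\bigr)$.

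For \eqref{eq2:2}, Lemma \ref{lem:1}(i) with $\mu=s/2$ together with \eqref{eq1:G_lip} gives the pointwise bound $\|A^{s/2}E(\tau_2-\sigma)(G(Y(\sigma))-G(Y(\tau_2)))\|_{L^0_2}\le C(\tau_2-\sigma)^{-s/2}\|Y(\sigma)-Y(\tau_2)\|$. Minkowski's integral inequality in $L^{p/2}(\Omega;\mathbb{R})$ then pulls the expectation inside the time integral, and the H\"older hypothesis on $Y$ reduces matters to the deterministic integral $C_\delta^2\int_{\tau_1}^{\tau_2}(\tau_2-\sigma)^{2\delta-s}\ds$. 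Since $s\le r+1$ and $\delta>r/2$, one has $2\delta-s>-1$, so this integral is finite and equals $(\tau_2-\tau_1)^{1+2\delta-s}/(1+2\delta-s)$, which is exactly what \eqref{eq2:2} requires.

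Finally, \eqref{eq2:est_stoch_int} will follow from Lemma \ref{lem:stoch_int} applied to the stochastic integral, combined with the splitting of $G(Y(\sigma))$ indicated above and the two bounds already proved. A short case check shows that $(1+2\delta-s)/2>\min\bigl(1/2,(1+r-s)/2\bigr)$ whether $s<r$ or $s\ge r$, so the larger time power coming from \eqref{eq2:2} can be absorbed into the constant by means of $\tau_2-\tau_1\le T$. The main obstacles in this plan are the case split on $s$ versus $r$ in \eqref{eq2:1}, and the integrability check $1+2\delta-s>0$ at $\sigma=\tau_2$ that makes the singular integral in \eqref{eq2:2} harmless; the remainder is routine manipulation of Hilbert--Schmidt norms and $L^p$-expectations.
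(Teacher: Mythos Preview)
Your proposal is correct and follows essentially the same approach as the paper: the paper also splits $G(Y(\sigma))=G(Y(\tau_2))+\bigl(G(Y(\sigma))-G(Y(\tau_2))\bigr)$, handles the first piece via Lemma~\ref{lem:1}\,(iii) together with the linear growth bound \eqref{eq1:G_lin}, and the second via Lemma~\ref{lem:1}\,(i), the Lipschitz condition \eqref{eq1:G_lip}, and Minkowski's inequality in $L^{p/2}(\Omega;\mathbb{R})$, before combining the two via Lemma~\ref{lem:stoch_int}. The only cosmetic difference is that the paper encodes your case split on $s<r$ versus $s\ge r$ in a single formula by writing $s=\eta+\rho+r$ with $\rho=\max(0,s-r)$ and $\eta=s-r-\rho\le 0$, rather than treating the two ranges separately.
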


\begin{proof} 
  First note that, for $0\le \tau_1 < \tau_2 \le T$ fixed, the mapping
  $[\tau_1,\tau_2] \ni \sigma \mapsto A^{\frac{s}{2}} E(\tau_2 - \sigma)
  G(Y(\sigma))$ is a predictable $L^0_2$-valued process. Hence, Lemma
  \ref{lem:stoch_int} is applicable and gives
  \begin{align*}
    &\Big\| \int_{\tau_1}^{\tau_2} A^{\frac{s}{2}} E(\tau_2 - \sigma)
    G(Y(\sigma))\dWs \Big\|_{\LOH}\\ 
    &\quad \le C(p) \Big\| \Big(
    \int_{\tau_1}^{\tau_2} \left\| A^{\frac{s}{2}} E(\tau_2 - \sigma)
    G(Y(\sigma)) \right\|_{L^0_2}^{2} \ds \Big)^{\frac{1}{2}}
    \Big\|_{\LOR}\\
    &\quad \le C(p) \Big\| \Big(
    \int_{\tau_1}^{\tau_2} \left\| A^{\frac{s}{2}} E(\tau_2 - \sigma)
    G(Y(\tau_2)) \right\|_{L^0_2}^{2} \ds \Big)^{\frac{1}{2}}
    \Big\|_{\LOR}\\ 
    &\qquad + C(p) \Big\| \Big(
    \int_{\tau_1}^{\tau_2} \left\| A^{\frac{s}{2}} E(\tau_2 - \sigma)
    \big(G(Y(\sigma)) - G(Y(\tau_2))\big) \right\|_{L^0_2}^{2} \ds
    \Big)^{\frac{1}{2}} \Big\|_{\LOR} \\
    &\quad=: S_1 + S_2.
  \end{align*}
  In the second step we just used the triangle inequality. Now we deal with
  both summands separately. In the first term $S_1$ the time in $G(Y(\tau_2))$
  is fixed. We also notice that $\eta := s-r - \max(0,s-r)\le 0$ and, hence,
  $A^{\frac{\eta}{2}}$ is a bounded linear operator on $H$. Furthermore, since
  $s \in [0,r+1]$ we have $\rho:= \max(0,s-r) \in [0,1]$ and Lemma
  \ref{lem:1} \emph{(iii)} is applicable. By writing $s = \eta + \rho + r$, we
  get
  \begin{align*}
    &\int_{\tau_1}^{\tau_2} \left\| A^{\frac{s}{2}} E(\tau_2 - \sigma)
    G(Y(\tau_2)) \right\|_{L^0_2}^{2} \ds \\
    &\quad = \int_{\tau_1}^{\tau_2} \sum_{m = 1}^\infty \left\|
    A^{\frac{s}{2}} E(\tau_2 - \sigma) G(Y(\tau_2)) \varphi_m
    \right\|^2 \ds\\
    &\quad \le \sum_{m = 1}^\infty \int_{\tau_1}^{\tau_2} \big\|
    A^{\frac{\eta}{2}} \big\|^2 \big\|  A^{\frac{\rho}{2}} 
    E(\tau_2 - \sigma) A^{\frac{r}{2}} G(Y(\tau_2)) \varphi_m \big\|^{2} \ds
    \\ 
    &\quad \le C(s,r) \big\| A^{\frac{\eta}{2}} \big\|^2
    \big\| A^{\frac{r}{2}} G(Y(\tau_2))
    \big\|^2_{L^0_{2}} (\tau_2 - \tau_1)^{\min(1,1+r-s)},   
  \end{align*}
  where $(\varphi_m)_{m \ge 1}$ denotes an orthonormal basis of $U_0$. We
  also used that $1 - \rho = 1 - \max(0,s-r) = \min(1,
  1+r-s)$. 
  Finally, by Assumption \ref{as:1} we conclude
  \begin{align*}
    S_1 &\le C(p,r,s,A,G) \Big(1 +  \sup_{\sigma \in [0,T]}
    \left\|A^{\frac{r}{2}} Y(\sigma)
    \right\|_{\LOH}\Big) (\tau_2 - \tau_1)^{\min(\frac{1}{2},\frac{1+r-s}{2})}.
 \end{align*}
 This proves \eqref{eq2:1}. For $S_2$ we first make use of
 the fact that $\| B  \Phi \|_{L^0_2} \le \| B \| \| \Phi \|_{L^0_2}$ and then
 apply Lemma \ref{lem:1} \emph{(i)} followed by \eqref{eq1:G_lip}
 to get
 \begin{align*}
   S_2 &\le C(p,s,G) \Big\| \Big( \int_{\tau_1}^{\tau_2} (\tau_2 -
   \sigma)^{-s} \| 
   Y(\sigma) - Y(\tau_2) \|^2 \ds \Big)^{\frac{1}{2}} \Big\|_{\LOR} \\
   &= C(p,s,G) \Big( \Big\| \int_{\tau_1}^{\tau_2} (\tau_2 - \sigma)^{-s} \|
   Y(\sigma) - Y(\tau_2) \|^2 \ds 
   \Big\|_{L^{p/2}(\Omega;\mathbb{R})} \Big)^{\frac{1}{2}}\\
   &\le C(p,s,G) \Big( \int_{\tau_1}^{\tau_2} (\tau_2 - \sigma)^{-s} \|
   Y(\sigma) - Y(\tau_2) \|^2_{\LOH} \ds \Big)^{\frac{1}{2}}.
 \end{align*}
 By the H\"older continuity of $Y$ we
 arrive at 
 \begin{align*}
   S_2 &\le C(p,s,G,C_\delta) \Big( \int_{\tau_1}^{\tau_2} (\tau_2 -
   \sigma)^{-s + 2\delta}  \ds \Big)^{\frac{1}{2}}\\
   &\le \frac{C(p,s,G,C_\delta)}{\sqrt{1 + 2 \delta - s}} (\tau_2 -
   \tau_1)^{\frac{1+2\delta-s}{2}}.
 \end{align*}
 This shows \eqref{eq2:2}. Combination of the estimates for $S_1$ and $S_2$
 yields \eqref{eq2:est_stoch_int} by using $(\tau_2 -
  \tau_1)^\delta \le T^{\delta}$. 
\end{proof}

\begin{lemma}
  \label{lem:3}
  Let $s \in [0,r+1]$, $p \ge 2$, and $Y$ be a
  stochastic 
  process on $[0,T]$ which maps into $H$ with $\sup_{\sigma \in [0,T]}
  \left\| Y(\sigma) \right\|_{\LOH} < \infty$. Then there exists a
  constant $C = C(r,s,F)$ such that, for all $\tau_1,\tau_2 \in [0,T]$ with
  $\tau_1 < \tau_2$,
  \begin{multline}
    \Big\| A^{\frac{s}{2}} \int_{\tau_1}^{\tau_2} E(\tau_2 - \sigma)
    F(Y(\tau_2)) \ds \Big\|_{\LOH}\\
    \le C \Big( 1 + \sup_{\sigma \in [0,T]} \left\| Y(\sigma) \right\|_{\LOH}
    \Big) (\tau_2 - \tau_1)^{\frac{1 + r - s}{2}}. 
    \label{eq2:3}
  \end{multline}
  If, in addition, for some $\delta > 0$ there exists $C_\delta$ such that
  \begin{align*}
    \left\| Y(t_1) - Y(t_2) \right\|_{\LOH} \le C_\delta |t_2 -
    t_1|^\delta \text{ for all } t_1,t_2 \in [0, T],
  \end{align*}
  then we also have, with $C = C(r,s,F,C_\delta)$, that
  \begin{multline}
    \Big\| A^{\frac{s}{2}} \int_{\tau_1}^{\tau_2} E(\tau_2 - \sigma) \big(
    F(Y(\tau_2)) - F(Y(\sigma)) \big) \ds \Big\|_{\LOH} \\
    \le \frac{
    C}{1 + r - s + 2\delta} (\tau_2 - \tau_1)^{\frac{1 + r - s +
    2\delta}{2}}.
    \label{eq2:4}
  \end{multline}
  In particular, with $C = C(T,\delta,r,s,F,C_\delta)$ it holds that
  \begin{multline}
    \label{eq2:est_det_int}
    \Big\| A^{\frac{s}{2}} \int_{\tau_1}^{\tau_2} E(\tau_2 - \sigma)
    F(Y(\sigma))\ds \Big\|_{\LOH}\\
    \le C \Big(1 +  \sup_{\sigma \in [0,T]}
    \left\| Y(\sigma)
    \right\|_{\LOH}\Big) (\tau_2 - \tau_1)^{\frac{1 + r - s}{2}}.
  \end{multline}
\end{lemma}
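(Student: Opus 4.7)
The proof splits naturally into the three displayed estimates, and the common thread is that Assumption \ref{as:3}, namely $F \colon H \to \dot{H}^{-1+r}$ with the associated Lipschitz bound, is exploited through the factorisation
$A^{s/2} E(\tau_2-\sigma) = A^{(s-r+1)/2}\, E(\tau_2-\sigma)\, A^{(r-1)/2},$
in which $A^{(r-1)/2}$ absorbs the negative regularity of $F$ and leaves the factor $A^{(s-r+1)/2}E(\tau_2-\sigma)$ acting on an element of $H$. The plan is to observe first that, under $s \in [0,r+1]$ and $r \in [0,1)$, the exponent $\rho := (s-r+1)/2$ lies in $((1-r)/2, 1] \subset (0,1]$, which is exactly the range in which the relevant parts of Lemma \ref{lem:1} apply.

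For \eqref{eq2:3}, I would use that $Y(\tau_2)$ is fixed with respect to $\sigma$ to pull the time-independent element $A^{(r-1)/2}F(Y(\tau_2)) \in H$ out of the integral. This reduces the left-hand side to the norm of $A^{(s-r+1)/2} \int_{\tau_1}^{\tau_2} E(\tau_2-\sigma)\,\bigl[A^{(r-1)/2}F(Y(\tau_2))\bigr] \ds$, which is precisely the situation of Lemma \ref{lem:1}(iv) and yields the factor $(\tau_2-\tau_1)^{(1+r-s)/2}$. Taking the $\LOH$-norm and applying the Lipschitz bound of Assumption \ref{as:3} in the form $\|F(Y(\tau_2))\|_{-1+r} \le C(1+\|Y(\tau_2)\|)$ then closes the estimate.

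For \eqref{eq2:4}, the integrand now genuinely depends on $\sigma$ through $Y(\sigma)$, so Lemma \ref{lem:1}(iv) is no longer available. Instead I would estimate pointwise in $\sigma$: the operator norm $\|A^{(s-r+1)/2} E(\tau_2-\sigma)\|$ is bounded by $C(\tau_2-\sigma)^{-(s-r+1)/2}$ via Lemma \ref{lem:1}(i), and the factor $\|F(Y(\tau_2))-F(Y(\sigma))\|_{-1+r}$ is controlled by $C\|Y(\tau_2)-Y(\sigma)\|$ using Assumption \ref{as:3}. Applying Minkowski's integral inequality to pull the $\LOH$-norm inside the $\sigma$-integral and then the assumed H\"older continuity of $Y$ reduces everything to the deterministic integral $\int_{\tau_1}^{\tau_2}(\tau_2-\sigma)^{-(s-r+1)/2+\delta}\, d\sigma$. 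The range $s \le r+1$ keeps the exponent strictly above $-1$, and a direct computation produces the claimed factor $\frac{1}{1+r-s+2\delta}(\tau_2-\tau_1)^{(1+r-s+2\delta)/2}$.

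Estimate \eqref{eq2:est_det_int} then follows at once by writing $F(Y(\sigma)) = F(Y(\tau_2)) - \bigl(F(Y(\tau_2))-F(Y(\sigma))\bigr)$, invoking the triangle inequality together with \eqref{eq2:3} and \eqref{eq2:4}, and absorbing the extra factor $(\tau_2-\tau_1)^\delta \le T^\delta$ coming from \eqref{eq2:4} in order to recover the uniform exponent $(1+r-s)/2$. The only real obstacle is bookkeeping: one needs to verify that $\rho = (s-r+1)/2$ stays in the admissible range of Lemma \ref{lem:1} for every $s \in [0,r+1]$ and that the singularity $(\tau_2-\sigma)^{-(s-r+1)/2+\delta}$ is integrable, but both are guaranteed by the hypotheses, and what remains is mechanical.
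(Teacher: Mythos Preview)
Your proposal is correct and follows essentially the same route as the paper: the same factorisation $A^{s/2}E(\tau_2-\sigma)=A^{(s+1-r)/2}E(\tau_2-\sigma)A^{(r-1)/2}$, Lemma~\ref{lem:1}\emph{(iv)} for \eqref{eq2:3}, Lemma~\ref{lem:1}\emph{(i)} together with Assumption~\ref{as:3} and the H\"older bound for \eqref{eq2:4}, and the triangle inequality plus $(\tau_2-\tau_1)^\delta\le T^\delta$ for \eqref{eq2:est_det_int}. The only cosmetic slip is that $\rho=(s-r+1)/2$ ranges over $[(1-r)/2,1]$ rather than the half-open interval you wrote, but this is immaterial since Lemma~\ref{lem:1}\emph{(iv)} covers all of $[0,1]$.
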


\begin{proof}
  As in the previous lemma the main idea is to use the H\"older
  continuity of $Y$ to estimate the left-hand side in
  \eqref{eq2:est_det_int}. We have
  \begin{align*}
    &\Big\| A^{\frac{s}{2}} \int_{\tau_1}^{\tau_2} E(\tau_2 - \sigma)
    F(Y(\sigma))\ds \Big\|_{\LOH}\\
    &\quad\le \Big\| A^{\frac{s}{2}} \int_{\tau_1}^{\tau_2} E(\tau_2 - \sigma)
    F(Y(\tau_2)) \ds \Big\|_{\LOH}\\
    &\qquad + \Big\| A^{\frac{s}{2}} \int_{\tau_1}^{\tau_2} E(\tau_2 - \sigma)
    \big( F(Y(\tau_2)) - F(Y(\sigma)) \big) \ds \Big\|_{\LOH}.    
  \end{align*}
  Therefore, if we show \eqref{eq2:3} and \eqref{eq2:4} then
  \eqref{eq2:est_det_int} follows immediately by using $(\tau_2 -
  \tau_1)^\delta \le T^{\delta}$.

  For \eqref{eq2:3} first note that the random variable $A^{\frac{-1+r}{2}}
  F(X(\tau_2))$ takes values in $H$ almost surely. Hence, we can apply Lemma
  \ref{lem:1} \emph{(iv)}. Together with Assumption \ref{as:3} this yields
  \begin{align*}
    &\Big\| A^{\frac{s}{2}} \int_{\tau_1}^{\tau_2} E(\tau_2 - \sigma)
    F(Y(\tau_2)) \ds \Big\|_{\LOH}\\ 
    &\quad \le \Big\| A^{\frac{s+1-r}{2}} \int_{\tau_1}^{\tau_2} E(\tau_2 -
    \sigma) A^{\frac{-1+r}{2}} F(Y(\tau_2)) \ds \Big\|_{\LOH} \\
    &\quad \le C(r,s) (\tau_2 - \tau_1)^{\frac{1+r-s}{2}} \big\|
    A^{\frac{-1+r}{2}} F(Y(\tau_2)) \big\|_{\LOH} \\
    &\quad \le C(r,s,F) \Big( 1 + \sup_{\sigma \in [0,T]} \left\| Y(\sigma)
    \right\|_{\LOH} \Big) (\tau_2 - \tau_1)^{\frac{1+r-s}{2}}.
  \end{align*}
  Finally, again by Lemma \ref{lem:1} and Assumption \ref{as:3}, we show
  \eqref{eq2:4}:
  \begin{align*}
    & \Big\| A^{\frac{s}{2}} \int_{\tau_1}^{\tau_2} E(\tau_2 - \sigma)
    \big( F(Y(\tau_2)) - F(Y(\sigma)) \big) \ds \Big\|_{\LOH} \\
    &\quad\le\int_{\tau_1}^{\tau_2} \big\|  A^{\frac{s + 1 -r}{2}} E(\tau_2 -
    \sigma) A^{\frac{-1+r}{2}} \left( F(Y(\tau_2)) - F(Y(\sigma)) \right)
    \big\|_{\LOH} \ds \\ 
    &\quad\le C(r,s,F) \int_{\tau_1}^{\tau_2}  (\tau_2 -
    \sigma)^{\frac{r-s-1}{2}} 
    \left\| Y(\tau_2) - Y(\sigma) \right\|_{\LOH} \ds\\
    &\quad\le C(r,s,F,C_\delta) \int_{\tau_1}^{\tau_2} (\tau_2 -
    \sigma)^{\frac{r-s-1 + 2\delta}{2}} \ds =
    \frac{2C(r,s,F,C_\delta)}{1 + r - s + 2\delta} (\tau_2 -
    \tau_1)^{\frac{1 + r-s + 2\delta}{2}}.
  \end{align*}
This completes the proof. 
\end{proof}

Now we are well prepared for the proof of Theorem \ref{prop:1}.
\begin{proof}[Proof of Theorem \ref{prop:1}]
  By taking norms in \eqref{eq1:mild} we get, for $t\in [0,T]$,
  \begin{align*}
    \left(\E \left[ \| X(t) \|^p_{r+1}\right] \right)^{\frac{1}{p}} &= \|
    A^{\frac{r+1}{2}} X(t) \|_{\LOH} \\
    &\le \| A^{\frac{r+1}{2}} E(t) X_0  \|_{\LOH} \\ 
    &\quad + 
    \Big\| A^{\frac{r+1}{2}} \int_{0}^{t} E(t-\sigma) F(X(\sigma))\ds
    \Big\|_{\LOH}\\ 
    &\quad + \Big\| A^{\frac{r+1}{2}} \int_{0}^{t} E(t-\sigma)
    G(X(\sigma)) \dWs \Big\|_{\LOH}\\
    &=: I + II + III.
  \end{align*}
  The first term is well-known from deterministic theory and can be estimated
  by 
  \begin{align*}
    \| A^{\frac{r+1}{2}} E(t) X_0  \|_{\LOH} \le \| A^{\frac{r+1}{2}} X_0
    \|_{\LOH} < \infty,     
  \end{align*}
  since $X_0\colon \Omega \to \dot{H}^{r+1}$ by Assumption \ref{as:2}.

  We recall that, by Theorem $1$ in \cite{jentzen2010b}, the mild
  solution $X$ is an $\dot{H}^r$-valued predictable stochastic process
  which is $\delta$-H\"older continuous for any $0 < \delta <
  \frac{1}{2}$ with respect to the norm $\| \cdot \|_{\LOH}$. We
  choose $\delta := \frac{r+1}{4}$ so that $0\le\frac{r}{2} < \delta <
  \frac{1}{2}$. Hence, we can apply Lemmas \ref{lem:2} and \ref{lem:3}
  with $Y=X$.
 
  For the second term we apply \eqref{eq2:est_det_int} with
  $\tau_1 = 0$, $\tau_2 = t$, $s = r+1$ and $Y = X$. This yields
  \begin{align*}
    II &\le C \Big( 1 +
    \sup_{\sigma \in [0,T]} \left\| X(\sigma) \right\|_{\LOH} \Big)
    < \infty.
  \end{align*}
  For the last term we apply \eqref{eq2:est_stoch_int} with
  the same parameters as above: 
  \begin{align*}
    III &\le C \Big(1 +  \sup_{\sigma \in [0,T]}
    \left\|A^{\frac{r}{2}} X(\sigma)
    \right\|_{\LOH}\Big) < \infty. 
  \end{align*}
  Note that $\sup_{\sigma \in [0,T]}  \| X(\sigma)
  \|_{\LOH}\le \sup_{\sigma \in [0,T]}  \|A^{\frac{r}{2}} X(\sigma)
  \|_{\LOH}$ is finite because of Theorem $1$ in \cite{jentzen2010b}.
\end{proof}

\sect{Regularity in time}
\label{sec:time}

This section is devoted to the temporal regularity of the mild solution. Our
main result is summarized in the following theorem. For the border case $s=r+1$
we refer to Theorem \ref{th:3} below.

\begin{theorem}[Temporal regularity]
  \label{prop:2}
  Let $r \in [0,1)$, $p\in[2,\infty)$. Under the assumptions of
  Section \ref{sec:intro} the unique mild solution $X$ to \eqref{eq1:SPDE} is
  H\"older continuous 
  with respect to $\big( \E \left[ \| \cdot \|^p_{s} \right]
  \big)^{\frac{1}{p}}$ and satisfies
  \begin{align}
    \label{eq3:1}
    \sup_{t_1,t_2 \in [0,T], t_1 \neq t_2} \frac{\big( \E \left[ \| X(t_1) -
    X(t_2) \|_{s}^p\right] \big)^{\frac{1}{p}} }{|t_1
    -t_2|^{\min(\frac{1}{2},\frac{1 +r- s}{2})}}
    < \infty 
  \end{align}
for every $s \in [0,r+1)$.  
\end{theorem}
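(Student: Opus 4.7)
The plan is to exploit the semigroup evolution property of the mild solution: starting from \eqref{eq1:mild}, a direct calculation for $0 \le t_1 < t_2 \le T$ gives
\begin{align*}
X(t_2) - X(t_1) &= (E(t_2 - t_1) - I)\, X(t_1) \\
&\quad - \int_{t_1}^{t_2} E(t_2 - \sigma) F(X(\sigma))\ds + \int_{t_1}^{t_2} E(t_2 - \sigma) G(X(\sigma))\dWs.
\end{align*}
Applying $A^{s/2}$ and the norm $\| \cdot \|_{\LOH}$ splits the right-hand side into three summands. The last two fit directly into the scope of Lemmas \ref{lem:2} and \ref{lem:3} with $Y = X$ and $(\tau_1, \tau_2) = (t_1, t_2)$, once we fix a Hölder exponent $\delta \in (r/2,\, 1/2)$ for $X$ in $\| \cdot \|_{\LOH}$; such a $\delta$ exists because $r<1$ and because the suboptimal result of \cite{jentzen2010b} produces Hölder continuity with every exponent smaller than $\frac{1}{2}$.

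The new ingredient is the estimate for the semigroup-difference term $A^{s/2}(E(t_2-t_1) - I) X(t_1)$, and this is the step that relies on the sharp spatial bound of Theorem \ref{prop:1}. Setting $\theta := \min\bigl(\frac{1}{2},\, \frac{1+r-s}{2}\bigr) > 0$, one has $\frac{s}{2} + \theta \le \frac{r+1}{2}$, so $A^{s/2 + \theta - (r+1)/2}$ is a bounded operator on $H$. I would then factor
\begin{align*}
A^{s/2}(E(t_2-t_1) - I)\, X(t_1) = A^{-\theta}\bigl(E(t_2-t_1) - I\bigr)\, A^{\,s/2 + \theta - (r+1)/2}\, A^{(r+1)/2} X(t_1),
\end{align*}
bound the semigroup-difference factor by $C\,(t_2-t_1)^\theta$ via Lemma \ref{lem:1}(ii) with $\nu = \theta$, and invoke Theorem \ref{prop:1} to control $\sup_{t \in [0,T]} \| A^{(r+1)/2} X(t) \|_{\LOH}$ uniformly.

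Collecting the three bounds, Lemma \ref{lem:2} produces $C\,(t_2-t_1)^\theta$ for the stochastic-integral term via \eqref{eq2:est_stoch_int}, Lemma \ref{lem:3} produces $C\,(t_2-t_1)^{(1+r-s)/2}$ for the deterministic-integral term via \eqref{eq2:est_det_int}, and the previous paragraph gives $C\,(t_2-t_1)^\theta$ for the semigroup-difference term. Since $(1+r-s)/2 \ge \theta$, the elementary estimate $(t_2-t_1)^{(1+r-s)/2} \le T^{(1+r-s)/2 - \theta}\,(t_2-t_1)^\theta$ absorbs the deterministic exponent into $\theta$, and dividing by $(t_2-t_1)^\theta$ before taking the supremum over $t_1 \ne t_2$ yields \eqref{eq3:1}. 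The only real obstacle is the first summand: without Theorem \ref{prop:1}, one could only reach exponents strictly less than $(1+r-s)/2$ as $s$ approaches $r+1$, which is precisely the gap that the border-case spatial result closes.
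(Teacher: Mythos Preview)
Your proof is correct, but it uses a genuinely different decomposition from the paper's. The paper expands both $X(t_1)$ and $X(t_2)$ via the mild formulation \eqref{eq1:mild} and obtains five terms: the semigroup difference acting on $X_0$, the ``tail'' integrals over $[t_1,t_2]$ for $F$ and $G$, and the ``difference'' integrals over $[0,t_1]$ for $F$ and $G$ (the latter two involving $E(t_2-\sigma)-E(t_1-\sigma)$). Each of the five is then estimated from scratch: the tail integrals by Lemmas \ref{lem:2} and \ref{lem:3} exactly as you do, and the difference integrals by first pulling out a factor $A^{-(1+r-s)/2}(E(t_2-t_1)-I)$ via Lemma \ref{lem:1}\,\emph{(ii)} and then bounding the remaining $A^{(r+1)/2}$-integrals by the same estimates that underlie Theorem \ref{prop:1}. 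In particular, the paper does \emph{not} quote Theorem \ref{prop:1}; it re-derives the relevant bounds inline.

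Your three-term decomposition via the evolution identity $X(t_2)=E(t_2-t_1)X(t_1)-\int_{t_1}^{t_2}E(t_2-\sigma)F\,\ds+\int_{t_1}^{t_2}E(t_2-\sigma)G\,\dWs$ packages the paper's $T_1$, $T_3$, $T_5$ into the single term $(E(t_2-t_1)-I)X(t_1)$ and then invokes Theorem \ref{prop:1} to supply the uniform $\dot H^{r+1}$ bound on $X(t_1)$. This is shorter and makes explicit that temporal regularity is a direct consequence of the sharp spatial result plus the two ``tail'' lemmas. The paper's version, on the other hand, keeps the argument self-contained and shows exactly which pieces of the spatial proof are actually needed (only the integral terms, never the full $\|X(t)\|_{r+1}$ bound). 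Both routes use the same H\"older input $\delta\in(r/2,1/2)$ from \cite{jentzen2010b} and the same key Lemmas \ref{lem:2} and \ref{lem:3}; the difference is purely in how the bookkeeping is organized.
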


\begin{proof}%[Proof of Theorem \ref{prop:2}]
  Let $0 \le t_1 < t_2 \le T$ be arbitrary. 
  By using the mild formulation \eqref{eq1:mild} we get
  \begin{align}
    &\big( \E \left[ \left\| X(t_1) - X(t_2) \right\|_{s}^p
    \right] \big)^{\frac{1}{p}} = \left\| A^{\frac{s}{2}} (X(t_1) - X(t_2))
    \right\|_{\LOH} \notag\\
    &\quad \le \left\| A^{\frac{s}{2}} ( E(t_1) - E(t_2) ) X_0 \right\|_{\LOH}
    \notag\\
    &\qquad + \Big\| A^{\frac{s}{2}} \int_{t_1}^{t_2} E(t_2 - \sigma)
    F(X(\sigma)) \ds \Big\|_{\LOH} \notag \\ 
    &\qquad + \Big\|A^{\frac{s}{2}}  \int_{0}^{t_1} \left( E(t_2 - \sigma) -
    E(t_1 - \sigma) \right) F(X(\sigma)) \ds\Big\|_{\LOH} \notag\\
    &\qquad + \Big\| A^{\frac{s}{2}} \int_{t_1}^{t_2} E(t_2 - \sigma)
    G(X(\sigma)) 
    \dWs \Big\|_{\LOH} \notag \\
    &\qquad + \Big\| A^{\frac{s}{2}} \int_{0}^{t_1} \left( E(t_2 - \sigma) -
    E(t_1 - \sigma) \right) G(X(\sigma)) \dWs \Big\|_{\LOH}\notag \\
    &\quad=: T_1 + T_2 + T_3 + T_4 + T_5. 
    \label{eq4:0}
  \end{align}
  We estimate the five terms separately. The term $T_1$ is estimated by 
  \begin{align*}
    T_1 &= \left\| A^{\frac{s-r-1}{2}} ( I - E(t_2 - t_1))
    A^{\frac{r+1}{2}} E(t_1) X_0 \right\|_{\LOH} \\
    &\le C \big\| A^{\frac{r+1}{2}} X_0
    \big\|_{\LOH} (t_2 - t_1)^{\frac{1+r-s}{2}},
  \end{align*}
  where we used Lemma \ref{lem:1} \emph{(ii)} and Assumption \ref{as:2}.

  As in the proof of Theorem \ref{prop:1} we choose the H\"older
  exponent $\delta := \frac{r+1}{4}$ so that $\frac{r}{2} < \delta <
  \frac{1}{2}$ and we can apply Lemmas \ref{lem:2} and \ref{lem:3}
  with $Y=X$.
  
  The term $T_2$ coincides with
  \eqref{eq2:est_det_int} and we have  
  \begin{align*}
    T_2 \le C \Big( 1 + \sup_{\sigma \in [0,T]} \| X(\sigma) \|_{\LOH} \Big)
    (t_2 - t_1)^{\frac{1 + r - s}{2}}.
  \end{align*}
  For the third term we also apply Lemma \ref{lem:1} \emph{(ii)} before
  we use \eqref{eq2:est_det_int}: 
  \begin{align*}
    T_3 &= \Big\| A^{\frac{s - r - 1}{2}} \left( E(t_2 - t_1) - I \right)
    A^{\frac{r+1}{2}} \int_{0}^{t_1} E(t_1 - \sigma) F(X(\sigma)) \ds
    \Big\|_{\LOH} \\
    &\le C (t_2 - t_1)^{\frac{1+r-s}{2}} \Big\| A^{\frac{r+1}{2}} 
    \int_{0}^{t_1}
    E(t_1 - \sigma) F(X(\sigma)) \ds
    \Big\|_{\LOH}\\
    &\le C \Big( 1 + \sup_{\sigma \in [0,T]} \left\| X(\sigma)
    \right\|_{\LOH} \Big) (t_2 - t_1)^{\frac{1+r-s}{2}}.
  \end{align*}
  The fourth term is estimated analogously by using
  \eqref{eq2:est_stoch_int} instead of \eqref{eq2:est_det_int}. We get
  \begin{align*}
    T_4 \le C 
    \Big( 1 +
    \sup_{\sigma \in [0,T]} \left\| A^{\frac{r}{2}} X(\sigma)
    \right\|_{\LOH} \Big) 
    (t_2 - t_1)^{\min(\frac{1}{2},\frac{1+r-s}{2})}.
  \end{align*}
  Finally, for the last term we use Lemma \ref{lem:stoch_int}
  first. Since, for $0 \le t_1 < t_2 \le T$ fixed, the function
  $[0,t_1] \ni \sigma \mapsto A^{\frac{s}{2}} \left( E(t_2 - \sigma) -
    E(t_1 - \sigma)\right) G(X(\sigma))$ is a predictable stochastic
  process Lemma \ref{lem:stoch_int} can be applied. Then, by using
  Lemma \ref{lem:1} \emph{(ii)} with $\nu = \frac{1+r-s}{2}$ and Lemma
  \ref{lem:2} with $s = r+1$ we get
  \begin{align*}
    T_5 &\le C \Big\| \Big( \int_{0}^{t_1} \big\| A^{\frac{s - r - 1}{2}}
    (E(t_2 - t_1) - I) A^{\frac{r+1}{2}} E(t_1 - \sigma) G(X(\sigma))
    \big\|^2_{L_2^0} \ds \Big)^{\frac{1}{2}} \Big\|_{\LOR} \\
    &\le C (t_2 - t_1)^{\frac{1 + r - s}{2}} \Big( 
    \Big\| \Big( \int_{0}^{t_1} \big\| A^{\frac{r+1}{2}} E(t_1 - \sigma)
    G(X(t_1)) \big\|^2_{L_2^0} \ds \Big)^{\frac{1}{2}} \Big\|_{\LOR} \\
    &\quad +\Big\| \Big( \int_{0}^{t_1} \big\| A^{\frac{r+1}{2}} E(t_1 - \sigma)
    \big( G(X(\sigma)) - G(X(t_1)) \big) \big\|^2_{L_2^0} \ds
    \Big)^{\frac{1}{2}} \Big\|_{\LOR} \Big)\\
    &\le C (t_2 - t_1)^{\frac{1 + r - s}{2}} \Big( 1 + \sup_{\sigma \in [0,T]}
    \left\| A^{\frac{r}{2}} X(\sigma) \right\|_{\LOH} \Big). 
  \end{align*}
  Altogether, this proves \eqref{eq3:1} and the H\"older continuity of
  $X$ with respect to the norm $\| A^{\frac{s}{2}} \cdot \|_{\LOH}$ for all $s
  \in [0,r+1)$. 
\end{proof}

The temporal regularity of $X$ with respect to the
norm $\big(\E[ \| \cdot \|^p_{r+1} \big)^{\frac{1}{p}}$ is more involved. For
the case $r=0$ we can prove the following result.
%For the general case $r \in
%(0,1)$ we refer to the remark at the end of this section.

\begin{theorem}
  \label{th:3}
  Let $r=0$ and $p \in [2,\infty)$. Under the assumptions of Section
  \ref{sec:intro} the 
  unique mild solution $X$ to \eqref{eq1:SPDE} is continuous 
  with respect to $\big( \E \left[ \| \cdot \|^p_{1} \right]
  \big)^{\frac{1}{p}}$.
\end{theorem}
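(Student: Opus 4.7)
I would start from the same five-term decomposition \eqref{eq4:0} used in the proof of Theorem \ref{prop:2}, now with $s = r + 1 = 1$. At this borderline value the exponent $\frac{1+r-s}{2}$ appearing in \eqref{eq2:est_stoch_int}, \eqref{eq2:3}, \eqref{eq2:est_det_int} and in Lemma \ref{lem:1}\emph{(ii)} degenerates to zero, so Lemmas \ref{lem:1}--\ref{lem:3} only yield uniform boundedness of $T_1,\dots,T_5$ in $h := t_2 - t_1$, not decay. To obtain continuity I would instead show $T_j \to 0$ for each $j$ by dominated convergence arguments built on the strong continuity $\lim_{h \to 0^+}(I-E(h))v = 0$ valid for every $v \in H$ together with the uniform bound $\|I - E(h)\| \le 2$.

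Three of the terms, namely $T_1$, $T_3$, $T_5$, factor as $\|(I-E(h))Z\|_{\LOH}$ after using $E(t_1) - E(t_2) = -E(t_1)(I-E(h))$ and, inside the integrals in $T_3$ and $T_5$, $E(t_2-\sigma) - E(t_1-\sigma) = -E(t_1-\sigma)(I-E(h))$. The respective factors are $Z_1 = A^{\frac{1}{2}} E(t_1) X_0$, bounded in $\LOH$ by Assumption \ref{as:2}; $Z_3 = A^{\frac{1}{2}} \int_0^{t_1} E(t_1-\sigma) F(X(\sigma)) \ds$, bounded by \eqref{eq2:est_det_int} applied with $s = 1$, $\tau_1 = 0$, $\tau_2 = t_1$; and $Z_5 = A^{\frac{1}{2}} \int_0^{t_1} E(t_1-\sigma) G(X(\sigma)) \dWs$, bounded by \eqref{eq2:est_stoch_int}. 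Fixing a reference point $t^* \in [0,T]$ and treating, say, the subcase $t_1 = t^*$ (the symmetric subcase follows by an analogous iterated application of the same strategy), $Z_j$ does not depend on $h$; then $\|(I - E(h))Z_j(\omega)\|^p \to 0$ pointwise with integrable dominator $(2\|Z_j(\omega)\|)^p$, so dominated convergence yields $T_j \to 0$.

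The terms $T_2$ and $T_4$ involve integrals over the shrinking interval $[t_1,t_2]$. Each is split by triangle inequality into a \emph{remainder} part (integrand $F(X(\sigma)) - F(X(t_2))$ or $G(X(\sigma)) - G(X(t_2))$) plus a \emph{frozen} part (integrand $F(X(t_2))$ or $G(X(t_2))$). The remainders are controlled by \eqref{eq2:4} and \eqref{eq2:2} with $s = 1$, $r = 0$, and $\delta = \frac{1}{4}$ (a valid H\"older exponent of $X$ in $\LOH$ by Theorem \ref{prop:2} at $s = 0$); both estimates produce a factor $h^{1/4} \to 0$. For the frozen part of $T_2$, the identity $\int_{t_1}^{t_2} E(t_2-\sigma) \ds = A^{-1}(I-E(h))$ gives
\[
  \Big\| A^{\frac{1}{2}} \int_{t_1}^{t_2} E(t_2-\sigma) F(X(t_2)) \ds \Big\|_{\LOH} = \big\|(I - E(h)) A^{-\frac{1}{2}} F(X(t_2))\big\|_{\LOH},
\]
and $A^{-\frac{1}{2}} F(X(t_2)) \in \LOH$ by Assumption \ref{as:3}, so the same dominated convergence as in the previous paragraph applies.

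The main obstacle is the frozen part of $T_4$: since $G(X(t_2))$ is not adapted on $[t_1,t_2]$, it cannot be substituted as an It\^o integrand and the $T_2$-style factorization is unavailable. Instead I would pass through Lemma \ref{lem:stoch_int} and bound $T_4 \le C(p)(S_1 + S_2)$ as in the proof of Lemma \ref{lem:2}, with $S_2$ the remainder (already controlled above) and $S_1 = \big(\E[\Lambda(h, G(X(t_2)))^{p/2}]\big)^{1/p}$, where the eigenbasis computation underlying Lemma \ref{lem:1}\emph{(iii)} gives
\[
  \Lambda(h,\Phi) := \int_0^h \big\| A^{\frac{1}{2}} E(s) \Phi \big\|_{L^0_2}^2 \ds = \tfrac{1}{2}\sum_{n=1}^{\infty}\big(1-\ee^{-2\lambda_n h}\big)\|\Phi^* e_n\|_{U_0}^2 \le \tfrac{1}{2}\|\Phi\|_{L^0_2}^2.
\]
The elementary inequality $\Lambda(h,\Phi_1) \le 2\Lambda(h,\Phi_2) + 2\Lambda(h,\Phi_1-\Phi_2)$ then reduces $S_1$ to controlling $\Lambda(h, G(X(t^*)))$, which tends to $0$ by two nested dominated convergence arguments (first in the eigenvalue index $n$ using $\|G(X(t^*))\|_{L^0_2}^2 < \infty$ almost surely, then in $\omega$ with integrable dominator $C(1+\|X(t^*)\|)^p$), plus a term $\|G(X(t_2))-G(X(t^*))\|_{L^p(\Omega;L^0_2)}$ that vanishes by \eqref{eq1:G_lip} and the $L^p$-continuity of $X$. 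This is the step where Lebesgue's theorem is genuinely indispensable and where the collapse of the H\"older estimate at $s = r+1$ requires real work.
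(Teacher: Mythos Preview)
Your overall strategy---the five-term split \eqref{eq4:0} at $s=1$, controlling the ``increment'' pieces via \eqref{eq2:2} and \eqref{eq2:4}, and treating the ``frozen'' pieces by dominated convergence through strong continuity of $E$---is exactly the paper's, and your analysis of $T_4$ (splitting to a fixed reference point $t^*$, called $t_3$ in the paper, and then a nested dominated-convergence argument via the eigenexpansion underlying Lemma~\ref{lem:4}) is essentially identical to the paper's.

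The gap is in $T_3$ and $T_5$. Your factorization $T_j=\|(I-E(h))Z_j\|_{\LOH}$, with $Z_j$ depending only on $t_1$, works cleanly for right continuity ($t_1=t^*$ fixed, hence $Z_j$ fixed), but for left continuity ($t_2=t^*$ fixed) the element $Z_j=Z_j(t_1)$ moves with $h$ and dominated convergence does not apply. The phrase ``analogous iterated application'' does not rescue this: telescoping $Z_j(t_1)$ to $Z_j(t^*)$ and using the identity $Z_j(t^*)-Z_j(t_1)=[\text{$T_{j-1}$-type term}]+(E(h)-I)Z_j(t_1)$ feeds $T_j$ back into its own bound, and the resulting inequality $T_j\le(\text{vanishing terms})+2T_j$ is vacuous. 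The paper breaks this circularity by an additional splitting \emph{before} factoring out $(I-E(h))$: it first separates $F(X(\sigma))-F(X(t_1))$ (respectively $G(X(\sigma))-G(X(t_1))$) inside the integrand and uses the $\tfrac12$-H\"older continuity of $X$ in $\LOH$ (available from Theorem~\ref{prop:2} with $s=0$) together with Lemma~\ref{lem:1}\emph{(i)},\emph{(ii)} and an auxiliary exponent $\eta\in(0,1)$ to manufacture a genuine factor $(t_2-t_1)^{\eta/2}$; only the residual frozen piece at $t_1$ is then telescoped to the fixed point and handled by Lipschitz continuity plus dominated convergence. (A related minor slip: the frozen part of your $T_2$ involves $F(X(t_2))$, which is not fixed in the right-continuity case $t_1=t^*$; this is cured by the same telescoping to $t^*$ you already carry out for $T_4$.)
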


Before we begin the proof we analyze the continuity properties of
the semigroup in the deterministic context. 

\begin{lemma}
  \label{lem:4}
  Let $0 \le \tau_1 < \tau_2 \le T$. Then we have

  (i) 
  \begin{align*}
    \lim_{\tau_2 - \tau_1 \to 0} \int_{\tau_1}^{\tau_2} \big\|
    A^{\frac{1}{2}} E(\tau_2 - \sigma) x \big\|^2 \ds = 0 \quad \text{ for all
    } x \in H, 
  \end{align*}

  (ii)
  \begin{align*}
    \lim_{\tau_2 - \tau_1 \to 0} \Big\| A \int_{\tau_1}^{\tau_2}  E(\tau_2 -
    \sigma) x \ds \Big\| = 0 \quad \text{ for all } x
    \in H.
  \end{align*}
%
%  (iii)
%  \begin{align*}
%    \lim_{\tau_2 - \tau_1 \to 0} \int_{0}^{\tau_1} \big\|
%    A^{\frac{1}{2}} \big( E(\tau_2 - \sigma) - E(\tau_1 - \sigma) \big) x 
%    \big\|^2 \ds = 0 \quad \text{ for all } x \in H,
%  \end{align*}
%
%  (iv)
%  \begin{align*}
%    \lim_{\tau_2 - \tau_1 \to 0} \Big\| A \int_{0}^{\tau_1} \big( E(\tau_2
%    - \sigma) - E(\tau_1 - \sigma) \big) x \ds \Big\| = 0 \quad \text{ for all
%    } x \in H. 
%  \end{align*}
\end{lemma}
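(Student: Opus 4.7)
The plan is to reduce both statements to the dominated convergence theorem for series, starting from the explicit eigenbasis expansions already derived in the proof of Lemma \ref{lem:1} \emph{(iii)}-\emph{(iv)}. The substance of the present lemma is precisely that in the borderline case $\rho = 1$, where the bounds of Lemma \ref{lem:1} \emph{(iii)}-\emph{(iv)} degenerate into mere uniform boundedness (the factor $(\tau_2 - \tau_1)^{1-\rho}$ collapses to $1$), one nonetheless recovers qualitative convergence to zero, though without a quantitative rate.

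For \emph{(i)} I would expand $x = \sum_{n \ge 1} (x,e_n) e_n$ and repeat verbatim the Parseval computation from the proof of Lemma \ref{lem:1} \emph{(iii)}, now with $\rho = 1$, arriving at
\begin{align*}
\int_{\tau_1}^{\tau_2} \big\| A^{\frac{1}{2}} E(\tau_2 - \sigma) x \big\|^2 \ds = \frac{1}{2} \sum_{n=1}^\infty (x,e_n)^2 \big( 1 - \ee^{-2\lambda_n(\tau_2 - \tau_1)} \big).
\end{align*}
Each summand tends to $0$ as $\tau_2 - \tau_1 \to 0$ and is dominated by $(x,e_n)^2$, a sequence summing to $\|x\|^2 < \infty$. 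The dominated convergence theorem for series then yields the claim.

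For \emph{(ii)} the same kind of expansion, this time following the computation in the proof of Lemma \ref{lem:1} \emph{(iv)} at $\rho = 1$, gives
\begin{align*}
\Big\| A \int_{\tau_1}^{\tau_2} E(\tau_2 - \sigma) x \ds \Big\|^2 = \sum_{n=1}^\infty (x,e_n)^2 \big( 1 - \ee^{-\lambda_n(\tau_2 - \tau_1)} \big)^2,
\end{align*}
and an identical dominated convergence argument, with the same summable majorant, concludes the proof.

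The only delicate point, and the reason this statement deserves a separate argument, is that one cannot simply invoke Lemma \ref{lem:1} at $\rho = 1$ since it provides no rate of decay; the limit must be extracted by exchanging limit and infinite sum, which is justified by the $\ell^1$ majorant $(x,e_n)^2$. No rate is obtained, but none is needed for the subsequent application in Theorem \ref{th:3}.
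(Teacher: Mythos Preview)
Your proof is correct and essentially identical to the paper's own argument: both compute the same Parseval expansions (as in the proofs of Lemma \ref{lem:1} \emph{(iii)}--\emph{(iv)} with $\rho=1$) and then apply dominated convergence termwise with the summable majorant $(x,e_n)^2$.
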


\begin{proof}
  As in the proof of Lemma \ref{lem:1} we use the orthogonal expansion of $x
  \in H$ with respect to the eigenbasis $(e_n)_{n \ge 1}$ of the operator $A$.
  Thus, for \emph{(i)} we get, as in the proof of Lemma \ref{lem:1}
  \emph{(iii)},
  \begin{align*}
    \int_{\tau_1}^{\tau_2} \left\| A^{\frac{1}{2}} E(\tau_2 - \sigma) x
    \right\|^2 \ds = \frac{1}{2} \sum_{n = 1}^\infty (x,e_n)^2 \left(1 - \ee^{2
    \lambda_n(\tau_2 - \tau_1)} \right).    
  \end{align*}
  We apply Lebesgue's dominated convergence theorem. Note that
  the sum is dominated by
  $\frac{1}{2} \| x \|^2$ for all $\tau_2 - \tau_1 \ge 0$.
  Moreover, for every $n \ge 1$ we have
  \begin{align*}
    \lim_{\tau_2 - \tau_1 \to 0} \big( 1 - \ee^{2\lambda_n(\tau_2 - \tau_1)}
    \big) (x, e_n)^2 = 0.
  \end{align*}
  Hence, Lebesgue's theorem gives us \emph{(i)}. The same argument also yields
  the second case, since
  \begin{align*}
    \Big\| A \int_{\tau_1}^{\tau_2} E(\tau_2 - \sigma) x \ds \Big\|^2 =
    \sum_{n = 1}^\infty (x,e_n)^2\big( 1 - \ee^{\lambda_n(\tau_2 - \tau_1)} \big)^2.  
 \end{align*}
%  and
%  \begin{align*}
%    &\int_{0}^{\tau_1} \big\|
%    A^{\frac{1}{2}} \left( E(\tau_2 - \tau_1) - I \right) E(\tau_1 - \sigma) x 
%    \big\|^2 \ds\\ 
%    &\quad= \sum_{n=1}^\infty \big( \ee^{-\lambda_n(\tau_2 - \tau_1)} -
%    1\big)^2 \int_{0}^{\tau_1} \lambda_n \ee^{-2\lambda_n(\tau_1 - \sigma)} \ds
%    \, (x,e_n)^2\\
%    &\quad= \frac{1}{2} \sum_{n=1}^\infty \big( \ee^{-\lambda_n(\tau_2 -
%    \tau_1)} - 1 \big)^2 \left( 1 - \ee^{-2\lambda_n\tau_1} \right) (x,e_n)^2,
%  \end{align*}
%  as well as
%  \begin{align*}
%    &\Big\| A \int_{0}^{\tau_1} \left( E(\tau_2
%    - \tau_1) - I \right) E(\tau_1 - \sigma) x \ds \Big\|^2\\
%    &\quad=
%    \sum_{n=1}^\infty \Big( \ee^{-\lambda_n(\tau_2 - \tau_1)} - 1 \Big)^2
%    \Big( \lambda_n \int_{0}^{\tau_1} \ee^{-\lambda(\tau_1 - 
%    \sigma)} \ds \Big)^2 (x,e_n)^2 \\
%    &\quad= \sum_{n=1}^\infty \big(\ee^{-\lambda_n(\tau_2 - \tau_1)} - 1
%    \big)^2 \left( 1 - \ee^{-\lambda_n \tau_1} \right) (x,e_n)^2.    
%  \end{align*}
The proof is complete. 
\end{proof}

\begin{proof}[Proof of Theorem \ref{th:3}]
  { We must show that
    $\lim_{t_2-t_1\to0^+}\|X(t_2)-X(t_1)\|_{L^p(\Omega;\dot{H}^1)}=0$
    with either $t_1$ or $t_2$ fixed.  As already demonstrated in the
    proof of Lemma \ref{lem:4} we use Lebesgue's dominated convergence
    theorem. Let $0\le t_1<t_2\le T$.  We consider again the terms
    $T_i$, $i=1,\ldots,5$, in \eqref{eq4:0} but now with $s=1$.

  For $T_1$ continuity follows immediately: For almost every $\omega \in
  \Omega$ we get that $X_0(\omega) \in \dot{H}^{1}$. Thus, for every fixed
  $\omega \in \Omega$ with this property we have
  \begin{align*}
    \lim_{t_2 - t_1 \to 0} \| (E(t_2) - E(t_1))
    A^{\frac{1}{2}} X_0(\omega) \| = 0
  \end{align*}
  by the strong continuity of the semigroup. We also have that $$\| (E(t_2) -
  E(t_1)) A^{\frac{1}{2}} X_0(\omega) \| \le \|A^{\frac{1}{2}}
  X_0(\omega) \|,$$ where the latter is an element of $L^p(\Omega;\R)$ as a
  function of $\omega \in \Omega$ by
  Assumption \ref{as:2}. Hence, Lebesgue's theorem is
  applicable and yields $\lim_{t_2 - t_1 \to 0} T_1 = 0$.

  In order to treat the right and left limits simultaneously in the
  remaining terms, we compute the limits as $t_1\to t_3$ and $t_2\to
  t_3$ for fixed but arbitrary $t_3\in[t_1,t_2]$.

  %Since the same argument applies to the remaining terms, it is sufficient to
  %provide the pointwise limit for fixed $\omega \in \Omega$
  %and a dominating function.
  
 In the case of $T_2$ we get
  \begin{align}
    \label{eq4:T2}
    \begin{split}
      T_2 &\le \Big\| A \int_{t_1}^{t_2} E(t_2 - \sigma)
      A^{-\frac{1}{2}} \big( F(X(\sigma)) - F(X(t_2)) \big) \ds
      \Big\|_{\LOH} \\
      &\quad + \Big\| A \int_{t_1}^{t_2} E(t_2 - \sigma) A^{-\frac{1}{2}} \big(
      F(X(t_2)) - F(X(t_3)) \big) \ds \Big\|_{\LOH} \\
      &\quad + \Big\| A \int_{t_1}^{t_2} E(t_2 - \sigma) A^{-\frac{1}{2}}
      F(X(t_3)) \ds \Big\|_{\LOH}.
    \end{split}
  \end{align}
  Because of \eqref{eq2:4}, where we can choose $s=r+1=1$ and $\delta =
  \frac{1}{4} > 0$, the limit $t_2 - t_1 \to 0$ of the first summand 
  is $0$. For the second summand in \eqref{eq4:T2} we apply Lemma
  \ref{lem:1} \emph{(iv)} with $\rho  =1$, and Assumption \ref{as:3} with
  $r=0$. Then we derive
  \begin{align*}
    & \Big\| A \int_{t_1}^{t_2} E(t_2 - \sigma) A^{-\frac{1}{2}} \big(
    F(X(t_2)) - F(X(t_3)) \big) \ds \Big\|_{\LOH} \\
    & \quad \le C \big\|A^{-\frac{1}{2}} ( F(X(t_2)) - F(X(t_3)) )
    \big\|_{\LOH}\\
    & \quad \le C \|X(t_2) - X(t_3) \|_{\LOH} 
  \end{align*}
  and the limit $t_2 \to t_3$ of this term vanishes by \eqref{eq3:1} with
  $s=0$. 

  For the last summand in \eqref{eq4:T2} we again apply Lemma \ref{lem:1}
  \emph{(iv)} with $\rho  =1$ and obtain, for almost every $\omega \in \Omega$,
  \begin{align*}
   \begin{split}
    \Big\| A \int_{t_1}^{t_2}
    E(t_2 - \sigma) A^{-\frac{1}{2}} F(X(t_3,\omega)) \ds
    \Big\|  
    &\le C \| A^{-\frac{1}{2}} F(X(t_3,\omega)) \| \\
    &\le C \big(1 + \| X(t_3,\omega) \| \big) ,    
  \end{split}
  \end{align*}
  which belongs to $L^p(\Omega;\R)$ for all $t_3 \in [0,T]$. By Lemma
  \ref{lem:4} \emph{(ii)} it also holds that
  \begin{align*}
    \lim_{t_1 \to t_3 \atop t_2 \to t_3} \Big\| A \int_{t_1}^{t_2}
    E(t_2 - \sigma) A^{-\frac{1}{2}} F(X(t_3,\omega)) \ds
    \Big\| = 0    
  \end{align*}
  for almost all $\omega \in \Omega$. Then Lebesgue's dominated convergence
  theorem yields that this term vanishes, which completes the proof for $T_2$.
  
  Next, we take care of $T_3$, which is estimated by
  \begin{align}
    \begin{split}
    T_3 &\le \Big\| A^{\frac{1}{2}} \int_{0}^{t_1} \big( E(t_2 - \sigma) -
    E(t_1 - \sigma)\big) \big( F(X(\sigma)) - F(X(t_1)) \big) \ds
    \Big\|_{\LOH} \\
    &\quad + \Big\| A^{\frac{1}{2}} \int_{0}^{t_1} \big( E(t_2 - \sigma) -
    E(t_1 - \sigma)\big) \big(F(X(t_1)) - F(X(t_3)) \big) \ds \Big\|_{\LOH}\\
    &\quad+ \Big\| A^{\frac{1}{2}} \int_{0}^{t_1} \big( E(t_2 - \sigma) - E(t_1
    - \sigma) \big) F(X(t_3)) \ds \Big\|_{\LOH}.
  \end{split}
  \label{eq4:T3}
  \end{align}
  For the first summand in \eqref{eq4:T3} we get by Lemma \ref{lem:1}
  \emph{(ii)} 
  \begin{align}
    \label{eq4:T31}
    \begin{split}
    &\Big\| A^{\frac{1}{2}} \int_{0}^{t_1} \big( E(t_2 - \sigma) -
    E(t_1 - \sigma)\big) \big( F(X(\sigma)) - F(X(t_1))
    \big) \ds 
    \Big\|_{\LOH}\\
    &\; \le \int_{0}^{t_1} \big\| A^{-\frac{\eta}{2}} \big( E(t_2 - t_1) -
    I \big) A^{\frac{1 + \eta}{2}}
    E(t_1 - \sigma) \big( F(X(\sigma)) - F(X(t_1)) \big)
    \big\|_{\LOH} \ds\\
    &\; \le C (t_2 - t_1)^{\frac{\eta}{2}} \int_{0}^{t_1} (t_1
    -\sigma)^{-\frac{2 + \eta}{2}} \big\| A^{-\frac{1}{2}} 
    \big( F(X(\sigma)) - F(X(t_1)) \big) \big\|_{\LOH} \ds,
  \end{split}
  \end{align}
  where $\eta \in (0,2]$.
  We continue the estimate by applying Assumption \ref{as:3} and the
  H\"older continuity of $X$ with exponent $\frac{1}{2}$ with respect to the
  norm $\| \cdot \|_{\LOH}$ as it was shown in \eqref{eq3:1} with $s=0$. This
  gives 
  \begin{align*}
    &\Big\| A \int_{0}^{t_1} \big( E(t_2 - \sigma) - 
    E(t_1 - \sigma)\big) A^{-\frac{1}{2}}\big( F(X(\sigma)) - F(X(t_1)) \big)
    \ds \Big\|_{\LOH}\\
    &\quad \le C (t_2 - t_1)^{\frac{\eta}{2}} \int_{0}^{t_1} (t_1
    -\sigma)^{-\frac{2 + \eta -1}{2}} \ds = C \frac{2}{1 - \eta} t_1^{\frac{1 -
    \eta}{2}} (t_2 - t_1)^{\frac{\eta}{2}}.
  \end{align*}
  Therefore, in the limit $t_2 - t_1 \to 0$ this term is zero as
  long as $\eta \in (0,1)$.

  For the second summand in \eqref{eq4:T3} we apply Lemma \ref{lem:1}
  \emph{(iv)} with $\rho = 1$ and get
  \begin{align*}
    &\Big\| A^{\frac{1}{2}} \int_{0}^{t_1} \big( E(t_2 - \sigma) -
    E(t_1 - \sigma)\big) F(X(t_1)) \ds \Big\|_{\LOH}\\
    &\quad  = \Big\| A \int_{0}^{t_1} E(t_1 - \sigma) \big( E(t_2 - t_1) -
    I \big) A^{-\frac{1}{2}} F(X(t_1)) \ds \Big\|_{\LOH}\\   
    &\quad \le C \Big\| \big( E(t_2 - t_1) -
    I \big) A^{-\frac{1}{2}} F(X(t_1)) \Big\|_{\LOH}\\
    &\quad \le C\Big\| \big( E(t_2 - t_1) - I \big)
    A^{-\frac{1}{2}}\big(F(X(t_1)) - F(X(t_3)\big) \Big\|_{\LOH} \\ 
    & \qquad+ C \Big\|\big( E(t_2 - t_1) - I \big) A^{-\frac{1}{2}} F(X(t_3))
    \Big\|_{\LOH}. 
  \end{align*}
  By Assumption \ref{as:3} and \eqref{eq3:1} it holds true that
  \begin{align*}
    &\Big\| \big( E(t_2 - t_1) - I \big)
    A^{-\frac{1}{2}}\big(F(X(t_1)) - F(X(t_3)\big) \Big\|_{\LOH}\\
    &\quad \le C \big\| X(t_1) - X(t_3) \big\|_{\LOH} \le C | t_1 -
    t_3|^{\frac{1}{2}}.
  \end{align*}
  Hence, this term vanishes in the limit $t_1 \to t_3$. 
  Therefore, the proof for $T_3$ is complete, if we can show that
  \begin{align*}
    &\lim_{t_1 \to t_3 \atop t_2 \to t_3} \big\| \big( E(t_2 - t_1) - I \big)
    A^{-\frac{1}{2}} F(X(t_3)) \big\|_{\LOH}\\
    &\quad = \lim_{t_1 \to t_3 \atop t_2 \to t_3} \Big\| A \int_{t_1}^{t_2}
    E(t_2 -\sigma) A^{-\frac{1}{2}} F(X(t_3)) \ds \Big\|_{\LOH} = 0.  
  \end{align*}
  This is true by an application of Lebesgue's dominated convergence theorem.
  In order to apply this theorem, we obtain a
  dominating function for almost every $\omega \in \Omega$ by
  \begin{align*}
    \big\| \big( E(t_2 - t_1) - I \big)
    A^{-\frac{1}{2}} F(X(t_3,\omega)) \big\| \le C \big( 1 + \| X(t_3,\omega)
    \| \big).    
  \end{align*}
  Further, Lemma \ref{lem:4} \emph{(ii)} yields
  \begin{align*}
    \lim_{t_1 \to t_3 \atop t_2 \to t_3} \Big\| A \int_{t_1}^{t_2}
    E(t_2 -\sigma) A^{-\frac{1}{2}} F(X(t_3,\omega)) \ds \Big\| = 0
  \end{align*}
  for almost every $\omega \in \Omega$. Altogether, this shows
  $\lim_{t_1 \to t_3 \atop t_2 \to t_3} T_3 = 0$.  
  }

  {
  For $T_4$, one has to use
  Lemma \ref{lem:stoch_int}, which yields
  \begin{align}
    \begin{split}
    T_4
    &\le C \Big\|
    \Big( \int_{t_1}^{t_2} \big\| A^{\frac{1}{2}} E(t_2-\sigma) G(X(\sigma))
    \big\|_{L_2^0}^2 \ds \Big)^{\frac{1}{2}} \Big\|_{\LOR} \\
    &\le C \Big\| \Big( \int_{t_1}^{t_2}
    \big\| A^{\frac{1}{2}} E(t_2-\sigma) \big( G(X(\sigma)) - G(X(t_2)) \big)
    \big\|_{L_2^0}^2 \ds \Big)^{\frac{1}{2}} \Big\|_{\LOR}\\
    &\quad + C \Big\| \Big( \int_{t_1}^{t_2}
    \big\| A^{\frac{1}{2}} E(t_2-\sigma) \big( G(X(t_2)) - G(X(t_3)) \big)
    \big\|_{L_2^0}^2 \ds \Big)^{\frac{1}{2}} \Big\|_{\LOR}\\
    &\quad + C \Big\| \Big( \int_{t_1}^{t_2}
    \big\| A^{\frac{1}{2}} E(t_2-\sigma) G(X(t_3))
    \big\|_{L_2^0}^2 \ds \Big)^{\frac{1}{2}} \Big\|_{\LOR}.
  \end{split}
  \label{eq4:T4}
  \end{align}
  The limit $t_2 -t_1 \to 0$ of the first summand is $0$ because of
  \eqref{eq2:2}, where we again choose $s = 1$ and $\delta = \frac{1}{4} >
  \frac{r}{2} = 0$. As before, we discuss the simultaneous limits $t_1 \to 
  t_3$ and $t_2 \to t_3$ for the remaining summands in \eqref{eq4:T4}.

  By Lemma \ref{lem:1} \emph{(iii)} it holds for the second summand in
  \eqref{eq4:T4} that
  \begin{align}
    \label{eq4:T42}
    \begin{split}
      & \Big\| \Big( \int_{t_1}^{t_2}
      \big\| A^{\frac{1}{2}} E(t_2-\sigma) \big( G(X(t_2)) -
      G(X(t_3)) \big) \big\|_{L_2^0}^2 \ds \Big)^{\frac{1}{2}} \Big\|_{\LOR} \\
      &\quad = \Big\| \Big( \sum_{m=1}^\infty \int_{t_1}^{t_2}
      \big\| A^{\frac{1}{2}} E(t_2-\sigma) \big( G(X(t_2)) -
      G(X(t_3)) \big) \varphi_m \big\|^2 \ds \Big)^{\frac{1}{2}}
      \Big\|_{\LOR} \\
      &\quad \le C \big\| G(X(t_2)) -
      G(X(t_3))  \big\|_{L^p(\Omega;L_{2}^0)} \\
      &\quad \le C \big\| X(t_2) - X(t_3) \big\|_{\LOH}.
    \end{split}
  \end{align}
  Consequently, this term also vanishes as $t_2 \to t_3$ by \eqref{eq3:1}. 
  
  Next we come to the third summand in \eqref{eq4:T4}. By Lemma \ref{lem:1}
  \emph{(iii)} with $\rho = 1$ and 
  Assumption \ref{as:1} 
  we obtain for almost every $\omega \in \Omega$
  \begin{align*}
    &\Big( \int_{t_1}^{t_2}
    \big\| A^{\frac{1}{2}} E(t_2-\sigma) G(X(t_3,\omega))
    \big\|_{L_2^0}^2 \ds \Big)^{\frac{1}{2}} \\
    &\quad = \Big( \sum_{m=1}^\infty \int_{t_1}^{t_2} \big\|
    A^{\frac{1}{2}} E(t_2-\sigma) G(X(t_3,\omega)) \varphi_m
    \big\|^2 \ds \Big)^{\frac{1}{2}} \\    
    &\quad \le C \Big( \sum_{m=1}^\infty  \big\|
    G(X(t_3,\omega)) \varphi_m
    \big\|^2 \Big)^{\frac{1}{2}} \le C \big( 1 + \big\| X(t_3,\omega) \big\|
    \big),
  \end{align*}
  where $(\varphi_m)_{m \ge 1}$ is an arbitrary orthonormal basis of $U_0$ and
  the last term belongs to $L^p(\Omega;\R)$. In order to apply Lebesgue's
  Theorem it remains to discuss the pointwise limit. For this Lemma \ref{lem:4}
  \emph{(i)} yields
  \begin{align*}
    & \lim_{t_1 \to t_3 \atop t_2 \to t_3} \int_{t_1}^{t_2}
    \big\| A^{\frac{1}{2}} E(t_2-\sigma) G(X(t_3,\omega))
    \big\|_{L_2^0}^2 \ds \\
    &\quad = \sum_{m=1}^\infty \lim_{t_1 \to t_3 \atop t_2 \to t_3}
    \int_{t_1}^{t_2} \big\| A^{\frac{1}{2}} E(t_2-\sigma) G(X(t_3,\omega))
    \varphi_m \big\|^2 \ds = 0.
  \end{align*}
  In fact, the interchanging of summation and taking the limit is justified by
  a further application of Lebesgue's Theorem. Altogether, this proves the
  desired result for $T_4$.
  }

  {
  The estimate of $T_5$ works similarly as for $T_3$. We
  apply Lemma \ref{lem:stoch_int} and get
  \begin{align}
    \label{eq4:T5}
    \begin{split}
      T_5 &\le C \Big\| \Big( \int_{0}^{t_1} \big\| A^{\frac{1}{2}} \big(
      E(t_2 - \sigma) - E(t_1 - \sigma) \big) 
      \big( G(X(\sigma)) - G(X(t_1)) \big) \big\|^2_{L_2^0}
      \ds\Big)^{\frac{1}{2}} \Big\|_{\LOR} \\
      &\quad + C \Big\| \Big( \int_{0}^{t_1} \big\| A^{\frac{1}{2}} \big(
      E(t_2 - \sigma) - E(t_1 - \sigma) \big) G(X(t_1)) \big\|^2_{L_2^0}
      \ds\Big)^{\frac{1}{2}} \Big\|_{\LOR}.
    \end{split}
  \end{align}
  By using a similar technique as for
  \eqref{eq4:T31} the first summand in \eqref{eq4:T5} is estimated by
  \begin{align*}
    &\Big\| \Big( \int_{0}^{t_1} \big\| A^{\frac{1}{2}} \big(
    E(t_2 - \sigma) - E(t_1 - \sigma) \big) \big( G(X(\sigma)) - G(X(t_1))
    \big) \big\|^2_{L_2^0}
    \ds\Big)^{\frac{1}{2}} \Big\|_{\LOR}^2\\
    &\; = \Big\| \int_{0}^{t_1} \big\| A^{\frac{1}{2}} \big(
    E(t_2 - \sigma) - E(t_1 - \sigma) \big) \big( G(X(\sigma)) - G(X(t_1))
    \big) \big\|^2_{L_2^0} \ds  \Big\|_{L^{p/2}(\Omega;\mathbb{R})}
    \\
    &\;\le C (t_2 - t_1)^{\eta} \Big\| \int_{0}^{t_1} (t_1 -
    \sigma)^{-1-\eta} 
    \left\| G(X(\sigma)) - G(X(t_1)) \right\|^2_{L_2^0} \ds
    \Big\|_{L^{p/2}(\Omega;\mathbb{R})}\\
    &\;\le C (t_2 - t_1)^{\eta} \int_{0}^{t_1} (t_1 - \sigma)^{-1 -\eta}
    \left\| X(\sigma) - X(t_1) \right\|^2_{\LOH} \ds\\
    &\;\le C (t_2 - t_1)^{\eta} \frac{1}{1 - \eta} t_1^{1 - \eta}.
  \end{align*}
  For the first inequality we applied Lemma \ref{lem:1} \emph{(i)} and
  \emph{(ii)} with an 
  arbitrary parameter $\eta \in (0,1)$. Then we used \eqref{eq1:G_lip} and the
  $\frac{1}{2}$-H\"older continuity of $X$. It follows that the summand
  vanishes in the limit $t_2 -t_1 \to 0$.
  }

  {
  For the second summand in \eqref{eq4:T5} it holds that
  \begin{align*}
    &\Big\| \Big( \int_{0}^{t_1} \big\| A^{\frac{1}{2}} \big(
    E(t_2 - \sigma) - E(t_1 - \sigma) \big) G(X(t_1)) \big\|^2_{L_2^0}
    \ds\Big)^{\frac{1}{2}} \Big\|_{\LOR}\\
    &\quad = \Big\| \Big( \sum_{m=1}^\infty \int_{0}^{t_1} \big\|
    A^{\frac{1}{2}} E(t_1 -\sigma) 
    \big( E(t_2 - t_1) - I \big) G(X(t_1)) \varphi_m \big\|^2
    \ds \Big)^{\frac{1}{2}} \Big\|_{\LOR} \\
    & \quad \le C  \big\| \big( E(t_2 - t_1) - I \big) G(X(t_1))
    \big\|_{L^p(\Omega;L_2^0)}\\
    &\quad \le C \big\|  G(X(t_1)) -
    G(X(t_3))  \big\|_{L^p(\Omega;L_2^0)} + \big\| \big( E(t_2 - t_1) - I \big)
    G(X(t_3)) \big\|_{L^p(\Omega;L_2^0)} ,
  \end{align*}
  where we used Lemma \ref{lem:1} \emph{(iii)}. By Assumption \ref{as:1} and
  \eqref{eq3:1} it holds that
  \begin{align*}
    \lim_{t_1 \to t_3} \big\|  G(X(t_1)) - G(X(t_3))
    \big\|_{L^p(\Omega;L_2^0)} = 0.
  \end{align*}
  Since, as above, Lebesgue's dominated convergence theorem yields that 
  \begin{align*}
    \lim_{t_1 \to t_3 \atop t_2 \to t_3} \big\| \big( E(t_2 - t_1) - I \big)
    G(X(t_3)) \big\|_{L^p(\Omega;L_2^0)} = 0
  \end{align*}
  the proof for $T_5$ is complete. This completes the proof of
  the theorem.
  }
\end{proof}

\begin{remark}
  If one wants to extend the result of Theorem \ref{th:3} to general $r \in
  [0,1)$ it is not hard to adapt the given arguments for all terms $T_i$, $i
  \in  \{1,2,3,5\}$. 
  
  For $T_4$, however, the situation is more delicate. This
  becomes apparent in the discussion of \eqref{eq4:T42}, which for $r \in
  (0,1)$ is equal to
  \begin{align*}
    &\Big\| \Big( \int_{t_1}^{t_2} \big\| A^{\frac{1+r}{2}} E(t_2 - \sigma)
    \big( G(X(t_2)) -
    G(X(t_3)) \big) \big\|_{L_2^0} \ds \Big)^{\frac{1}{2}}
    \Big\|^2_{\LOR}\\
    &\quad \le C \big\| A^{\frac{r}{2}} \big( G(X(t_2)) -
    G(X(t_3)) \big) \big\|_{L^p(\Omega;L_2^0)}.
  \end{align*}
  Unlike the case $r=0$ we do not want to assume that $x \mapsto
  A^{\frac{r}{2}} G(x)$ is globally Lipschitz continuous. Therefore, we cannot
  directly conclude that this term vanishes in the limit $t_2 \to t_3$. 

  However, for $p \in (2,\infty]$, it is enough to assume that the
  mapping $x \mapsto A^{\frac{r}{2}} G(x)$ is continuous. In order to
  show this we use a generalized version of Lebesgue's dominated
  convergence theorem (see \cite[1.23]{alt2006}), which allows a
  $t_2$-dependent family of dominating functions.

  In fact, by the linear growth
  condition \eqref{eq1:G_lin} we obtain, for almost all
  $\omega \in \Omega$, 
  \begin{align*}
    \big\| A^{\frac{r}{2}} \big( G(X(t_2,\omega)) -
    G(X(t_3,\omega)) \big) \big\|_{L_2^0} \le C \big( 1+ \| A^{\frac{r}{2}}
    X(t_2,\omega) \| + \|A^{\frac{r}{2}} X(t_3,\omega) \| \big),   
  \end{align*}
  where, by \eqref{eq3:1}, the family of dominating functions
  converges:
  \begin{align*}
    \big(1+ \| A^{\frac{r}{2}}
    X(t_2) \| + \|A^{\frac{r}{2}} X(t_3) \| \big) \to \big( 1+ 2
    \|A^{\frac{r}{2}} X(t_3) \| \big) \quad 
    \text{ in } L^p(\Omega,\R)
    \text{ as } t_2 \to t_3 . 
  \end{align*}
  Further, by \eqref{eq3:1} with $p \in (2,\infty)$ Kolmogorov's
  continuity theorem \cite[Th.~3.3]{daprato1992} yields that there
  exists a continuous version of the process $t \mapsto
  A^{\frac{r}{2}} X(t)$. Hence, under the additional assumption that
  $x \mapsto A^{\frac{r}{2}} G(x)$ is continuous we obtain, for almost
  all $\omega \in \Omega$,
  \begin{align*}
    \lim_{t_2 \to t_3} \big\| A^{\frac{r}{2}} \big( G(X(t_2,\omega)) -
    G(X(t_3,\omega)) \big) \big\|_{L_2^0} = 0.
  \end{align*}
  By the generalized version of the dominated convergence theorem
  (see \cite[1.23]{alt2006}) we conclude that 
  the unique mild solution $X$ to \eqref{eq1:SPDE} is continuous with respect
  to $\big(\E[ \| \cdot \|^p_{r+1} ]\big)^{\frac{1}{p}}$.
  
  The case $p=2$ remains as an open problem. 
\end{remark}

\section{Additive noise and optimal regularity}
\label{sec:add_noise}
In this section we briefly review the assumptions and our results in the
case of additive noise, that is, we consider the case where $G \in L_2^0$ is
independent of $x$. Then the SPDE \eqref{eq1:SPDE} has the form
\begin{align}
  \begin{split}
  \dd X(t) + \left[ AX(t) + F(X(t)) \right] \dt &= G \dWt,\quad
  \text{for } 0 \le t \le T,\\
  X(0) &= X_0.
\end{split}
\label{eq4:SPDE_add}
\end{align}
For related regularity results in this special case we refer to
\cite[Ch. 5]{daprato1992}. 

Since now $G$ is a fixed bounded linear operator Assumption \ref{as:1} is
simplified to 
\begin{assumption}[Additive noise]
  \label{as:4}
  The Hilbert-Schmidt operator $G$ satisfies
  \begin{align}
    \| G \|_{L_{2,r}^0} = \| A^{\frac{r}{2}} G \|_{L_2^0} < \infty.
    \label{eq4:G}
  \end{align}
\end{assumption}

Recall that the covariance operator $Q$ of the Wiener process $W$ is 
incorporated into the norm $\| \cdot \|_{L^0_2}$. If, for example, $H=U$
and $G$ is the identity $I\colon H \to H$, then
\eqref{eq4:G} reads as follows
\begin{align*}
  \| I \|_{L_{2,r}^0} = \sum_{m = 1}^\infty \big\| A^{\frac{r}{2}}
  Q^{\frac{1}{2}} \varphi_m \big\|^2 < \infty,
\end{align*}
where $(\varphi_m)_{m \ge 1}$ denotes an arbitrary orthonormal basis of the
Hilbert space $H$. This is a common assumption on the
covariance operator $Q$ (see \cite{daprato1992}). In particular, for $r=0$ this
condition becomes $\| I \|_{L_{2}^0} = \mathrm{Tr}(Q) < \infty$.

Our result for additive noise is summarized by the following
corollary.

\begin{corollary}[Additive noise]
  \label{cor:1}
  If the Assumptions \ref{as:3}, \ref{as:2} and \ref{as:4} hold for
  some $r\in[0,1]$, $p\in[2,\infty)$, then the unique mild solution $X
  \colon [0,T] \times \Omega \to H$ to \eqref{eq4:SPDE_add} takes
  values in $\dot{H}^{r+1}$.  Moreover, for every $s \in [0,r+1]$, the
  solution process is continuous with respect to $\big(\E \left[ \|
    \cdot \|_{s}^p \right] \big)^{\frac{1}{p}}$ and fulfills
  \begin{align*}
    \sup_{t_1,t_2 \in [0,T], t_1 \neq t_2} \frac{\big(\E \left[ \| X(t_1) -
    X(t_2) \|_{s}^p\right] \big)^{\frac{1}{p}} }{|t_1
    -t_2|^{\min(\frac{1}{2},\frac{r + 1 - s}{2})}} < \infty.
  \end{align*}    
\end{corollary}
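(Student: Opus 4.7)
The plan is to bootstrap from Theorems \ref{prop:1}, \ref{prop:2} and \ref{th:3} (together with the remark following \ref{th:3}), and then to push both the spatial and the temporal regularity to the border cases $r=1$ and $s=r+1$ by exploiting the simplifications specific to additive noise. First, I would check that Assumption \ref{as:1} is implied by Assumption \ref{as:4}: \eqref{eq1:G_lip} holds trivially with $G(x)-G(y)=0$, and \eqref{eq1:G_lin} reduces to $\|G\|_{L^0_{2,r}}<\infty$. Hence for every $r\in[0,1)$ and $p\in[2,\infty)$, Theorem \ref{prop:1} yields $X(t)\in\dot{H}^{r+1}$ almost surely, Theorem \ref{prop:2} gives H\"older continuity with exponent $\min(\tfrac{1}{2},\tfrac{1+r-s}{2})$ for every $s\in[0,r+1)$, and Theorem \ref{th:3} in the version extended in the remark following it (whose only extra hypothesis is continuity of $x\mapsto A^{r/2}G(x)$, trivially satisfied here) yields continuity at the border $s=r+1$ with respect to the norm $(\E[\,\|\cdot\|_{r+1}^{p}\,])^{1/p}$.

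The substantive extension is to $r=1$. The key observation is that the $S_{2}$-term in Lemma \ref{lem:2}, which is the only point in the proofs of Theorems \ref{prop:1} and \ref{prop:2} where the restriction $\delta>r/2$, and therefore $r<1$, is invoked, vanishes identically in the additive noise setting since $G(Y(\sigma))-G(Y(\tau_{2}))=0$. Only $S_{1}$ survives, and its estimate is valid for every $r\ge 0$ and every $s\in[0,r+1]$. The $F$ contribution is still controlled by Lemma \ref{lem:3}, which demands only $\delta>0$, and a H\"older exponent of $X$ in $\LOH$ with $\delta\in(0,\tfrac{1}{2})$ is already available from the $r=0$ regularity in \cite{jentzen2010b}. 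Substituting these two facts into the proofs of Theorems \ref{prop:1} and \ref{prop:2} yields $X(t)\in\dot{H}^{2}$ with finite $p$-th moment and the desired H\"older estimates for every $s\in[0,2)$. The remaining border case $s=2$ for $r=1$ is then handled by rerunning the argument of Theorem \ref{th:3} with $s=2$; in the additive setting the stochastic contributions reduce via Lemma \ref{lem:stoch_int} to the deterministic statement $\int_{t_{1}}^{t_{2}}\|AE(t_{2}-\sigma)G\|_{L^{0}_{2}}^{2}\ds\to 0$, which follows from Lemma \ref{lem:4}(i) applied termwise in the Hilbert--Schmidt expansion of $G$ combined with dominated convergence over an orthonormal basis of $U_{0}$.

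The main obstacle is the bookkeeping: at each occurrence of a strict inequality $r<1$ or $s<r+1$ in the three earlier proofs one must verify that the restriction is either neutralized by the identity $G(Y(\sigma))-G(Y(\tau_{2}))=0$ in the noise terms, or else requires only a positive H\"older exponent of $X$ in $\LOH$ in the $F$-terms, which is already supplied by the $r=0$ case of \cite{jentzen2010b}. Once this verification is completed, the corollary follows as an essentially mechanical consequence of the earlier results.
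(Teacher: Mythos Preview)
Your approach is essentially the same as the paper's: identify that the only obstruction to $r=1$ (and to $s=r+1$) is the $S_{2}$-term \eqref{eq2:2} and its analogues in the continuity proof, and observe that these vanish identically when $G$ is constant. The paper's justification is exactly this one-line observation.

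One small point to tighten: in your first paragraph you invoke the remark following Theorem~\ref{th:3} to obtain continuity at $s=r+1$ for $r\in(0,1)$. That remark carries the restriction $p\in(2,\infty)$ (the case $p=2$ is left open there), so appealing to it as stated does not cover $p=2$. The fix is the one you already describe in your bookkeeping paragraph and which the paper uses: rather than applying the remark, note directly that the offending term $\|A^{r/2}(G(X(t_{2}))-G(X(t_{3})))\|_{L^{p}(\Omega;L^{0}_{2})}$ is identically zero in the additive case, so the complication that forced $p>2$ never arises. With that adjustment your argument is complete and matches the paper's.
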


We stress that the case $r=1$ is now included. In fact, the only place,
where $r<1$ is required, is the estimate \eqref{eq2:2} and its
consequences. But in the case of additive noise the left-hand side of
this estimate is equal to zero and we avoid this problem. The same is
true for the proof of continuity, where the critical terms vanish
analogously (c.f.\ the proof of Theorems \ref{prop:2} and \ref{th:3}).

We conclude this section by an example, which demonstrates that our
spatial regularity results are optimal. Without loss of generality we
restrict our discussion to the case $p=2$. For $p>2$ one may use the results
on the optimal regularity of the stochastic convolution from
\cite{neerven2010} or \cite{brzezniak2009}.

\begin{example}
  Let $H = L^2(0,1)$ be the space of all square integrable real-valued
  functions which are defined on the unit interval $(0,1)$. Further,
  assume that $-A$ is the Laplacian with Dirichlet boundary
  conditions. In this situation the orthonormal eigenbasis
  $(e_k)_{k\ge1}$ of $-A$ is explicitly known to be
  \begin{align*}
    \lambda_k = k^2 \pi^2 \quad \text{and} \quad e_k(y) = \sqrt{2} \sin(k\pi y)
    \quad \text{for all } k \ge 1, \; y \in (0,1).
  \end{align*}
  Consider the SPDE
  \begin{align}
    \begin{split}
      \dd X(t) + A X(t) \dt &= G \dWt, \quad \text{for } 0 \le t \le T,\\
      X(0) &= 0.
    \end{split}
    \label{eq5:ex1}
  \end{align}
  We choose the operator $G$ to be the identity on $H$, and $W$ to be a
  $Q$-Wiener process on $H$, 
  where the covariance operator $Q \colon H \to H$ is given by
  \begin{align*}
    Q e_1 = 0, \quad 
    Q e_k = \frac{1}{k \log(k)^2} e_k \quad \text{ for all } k
    \ge 2.
  \end{align*}
  Then we have
  \begin{align*}
    \| G \|_{L^0_{2,r}} = \sum_{k=2}^\infty \big\| A^{\frac{r}{2}} 
    Q^{\frac{1}{2}} e_k \big\|^2 = \sum_{k=2}^\infty \lambda_k^{r}
    \frac{1}{k \log(k)^2} =
    \pi^{2r} \sum_{k=2}^\infty \frac{k^{2r}}{k \log(k)^2}. 
  \end{align*}
  Since this series converges only with $r=0$, Assumption \ref{as:4} is
  satisfied only for $r = 0$. Corollary \ref{cor:1} yields
  that the 
  mild solution $X$ to \eqref{eq5:ex1} takes values in $\dot{H}^{1}$. In the
  following we show that this result cannot be improved.
  
  In our example the mild formulation \eqref{eq1:mild} reads
  \begin{align*}
    X(t) = \int_{0}^{t} E(t-\sigma) G \dWs.
  \end{align*}
  Hence, by the It\^o-isometry for the stochastic integral we have
  \begin{align*}
    \E \big[ \big\| A^{\frac{1+r}{2}} X(t) \big\|^2 \big] &=
    \int_{0}^{t} \big\| A^{\frac{1+r}{2}} E(t-\sigma) G \big\|^2_{L_2^0} \ds \\
    &= \int_{0}^{t} \sum_{k=2}^\infty \lambda_k^{1+r}
    \ee^{-2\lambda_k(t-\sigma)} \frac{1}{k \log(k)^2} \ds \\
    &= \frac{1}{2} \sum_{k=2}^\infty \lambda_k^{r} \big( 1 -
    \ee^{-2\lambda_k t} \big) \frac{1}{k \log(k)^2} \\
    &\ge
    \frac{1}{2} \pi^{2r} \big( 1 - \ee^{-2\lambda_1 t} \big) \sum_{k=2}^{\infty}
    \frac{k^{2r}}{k \log(k)^2} = \infty \quad \text{for all } t > 0,\; r >
    0. 
  \end{align*}
  Thus, $X(t) \notin L^2(\Omega; \dot{H}^{1+r})$ for $r>0$.
\end{example}

%\bibliographystyle{plain}
%\bibliography{lit}
\def\cprime{$'$} \def\polhk#1{\setbox0=\hbox{#1}{\ooalign{\hidewidth
  \lower1.5ex\hbox{`}\hidewidth\crcr\unhbox0}}}

\end{document}